\newtheorem{theorem}{Theorem}[section]
\newtheorem{corollary}[theorem]{Corollary}
\newtheorem{remark}[theorem]{Remark}
\newtheorem{definition}[theorem]{Definition}
\newtheorem{proposition}[theorem]{Proposition}
\newtheorem{example}[theorem]{Example}
\newenvironment {proof} {{\it Proof.}}{\hspace*{\fill}$\Box$\par\vspace{4mm}}
\newcommand{\mc}{\mathcal}
\newcommand{\mb}{\mathbb}
\title{Singular recursive utility}
\author{K. R. Dahl \thanks{\protect\normalsize Department of Mathematics, University of Oslo, Pb. 1053 Blindern, 0316 Oslo, Norway. Email: kristrd@math.uio.no} \and B. {\O}ksendal \thanks{\protect\normalsize Department of Mathematics, University of Oslo.  This research was carried out with partial support of CAS - Centre for Advanced Study, at the Norwegian Academy of Science and Letters, within the research program SEFE.   Email: oksendal@math.uio.no.}}
\date{16 March 2017\\
\emph{The final version of this paper will be published in Stochastics.}}
\begin{document}

\maketitle

\begin{abstract}
We introduce the concept of singular recursive utility. This leads to a kind of singular BSDE which, to the best of our knowledge, has not been studied before. We show conditions for existence and uniqueness of a solution for this kind of singular BSDE. Furthermore, we analyze the problem of maximizing the singular recursive utility. We derive sufficient and necessary maximum principles for this problem, and connect it to the Skorohod reflection problem. Finally, we apply our results to a specific cash flow. In this case, we find that the optimal consumption rate is given by the solution to the corresponding Skorohod reflection problem.
\end{abstract}

\textbf{Keywords:} Singular recursive utility, singular jump-diffusion processes, optimal control problem, stochastic maximum principle, singular backward stochastic differential equation, generalized Skorohod reflection problem, optimal consumption.

\smallskip

\textbf{MSC classification:} 60H99, 60J65, 60J75, 91G80, 93E20

\section{Introduction}

Let $c(t) \geq 0$ be a consumption rate process. The classical way of measuring the total utility of $c$ from $t=0$ to $t=T$ is by the expression

\[
 J(c) = E[\int_0^T U(t,c(t))dt]
\]

\noindent where $U(t, \cdot)$ is a utility function for each $t$. This way of adding utility rates over time has been criticized from an economic and modeling point of view. See e.g. Mossin (1969) and Hindy, Huang \& Kreps (1992).

Instead, Duffie and Epstein (1992) proposed to use \emph{recursive utility} $Y(t)$, defined as the solution of the backward stochastic differential equation (BSDE)

\begin{equation}
 \label{eq: Rec_utility_BSDE}
Y(t) =E[\int_t^T g(s, Y(s), c(s))ds | \mc{F}_t]; t \in [0,T].
\end{equation}

Thus, we see that $Y(0) = J(c)$ in the special case where $g(s,y,c) = U(s,c)$ does not depend on $y$.

The question is: How should we model the recursive utility of a \emph{singular} consumption process $\xi$? A possible proposal is

\begin{equation}
 \label{eq: BSDE_rec_utility_singular}
Y(t)= E[\int_t^T g(s, Y(s), \xi(s)) d \xi(s)| \mc{F}_t].
\end{equation}


If we write

\[
 Y(t) = E[\int_0^T g(s, Y(s), \xi(s)) d \xi(s) | \mc{F}_t] - \int_0^t g(s, Y(s), \xi(s)) d \xi(s)
\]

\noindent we get by the martingale representation theorem (see for instance {\O}ksendal (2007)) that $Y(t)$ can be written (in the Brownian motion case):

\[
 Y(t) = -\int_0^t g(s, Y(s), \xi(s)) d \xi(s) + \int_0^t Z(s) dB(s)
\]

\noindent for some adapted process $Z$. Thus, $(Y,Z)$ soves the \emph{singular BSDE}

\begin{equation}
 \label{eq: singular_BSDE}
\begin{array}{lll}
dY(t) &=& -g(t,Y(t),\xi(t))d\xi(t) + Z(t)dB(t) \\[\smallskipamount]
Y(T) &=& 0.
\end{array}
\end{equation}

To the best of our knowledge, such singular BSDEs have not been studied before. Wang (2004) studies a somewhat similar problem. However, the BSDE in Wang (2004) is, in contrast to the BSDE~(\ref{eq: singular_BSDE}), a mix of singular- and Lebesgue integration. Another related paper is Diehl and Friz (2012) which studies BSDEs with rough drivers. Although the BSDEs studied in Diehl and Friz (2012) resembles equation~\eqref{eq: singular_BSDE}, our function $g$ is more general than the corresponding function in Diehl and Friz (2012).

Other recent extensions to the literature on recursive utility include Kraft and Seifried (2014) which derives stochastic differential utility as a limit of resursive utility and Belak et al. (2016) which derives a theory for backward nonlinear expectation equations and defines recursive utility in this framework. Marinacci and Montrucchio (2010) study uniqueness of solutions for stochastic recursive utilities. In addition, Kraft et al.(2017) studies a consumption and investment problem with recursive utility.

The aim of this paper is to study singular BSDEs such as~\eqref{eq: singular_BSDE} and to maximize the corresponding singular recursive utility. In Section~\ref{sec: LinSingularBSDE} the singular BSDE is analyzed. In Sections~\ref{sec: max_sing_rec_utility}-\ref{sec: NecMaxPrinc}, sufficient and necessary maximum principles for the singular recursive utility problem are derived. Finally, we apply these results in Section~\ref{sec: applications} to solve an optimal consumption problem for a specific cash flow. The resulting optimal consumption rate is the solution of a Skorohod problem, which (if the Skorohod problem has a solution) is a local time. Hence, the optimal consumption rate is in general singular. This is in contrast to the classical solution of the optimal consumption problem, which only permits consumption processes which are absolutely continuous.

\section{Problem formulation}

Consider a probability space $(\Omega, \mc{F}, P)$. In this space, we let $B(t)$ be a Brownian motion and $\tilde{N}(dt, \cdot)=N(dt,\cdot)-\nu(\cdot)dt$ be an independent compensated Poisson random measure. We assume that the L\'evy measure $\nu$ of the Poisson random measure $N$ satisfies
$$ \int_{\mathbb{R}} \zeta^2 \nu(d\zeta) < \infty.$$
On the other hand, we allow that, for all $\epsilon > 0$,
$$ \int_0^{\epsilon}\zeta \nu(d\zeta) = \infty,$$
so $\nu$ may have infinite variation on any time interval $[0,\epsilon].$\\
We let $\mathbb{F}=\{ \mc{F}_t \} _{t \in [0,T]}$ be the natural filtration generated by the Brownian motion and the compensated Poisson random measure.

Also, consider a consumption exposed cash flow $X(t)= X^{\xi}(t)$ modeled by a stochastic differential equation (SDE) with jumps as follows:

\begin{equation}
 \label{eq: SDE_X}
 \begin{array}{llll}
dX(t) &=& b(t, X(t))dt + \sigma(t, X(t))dB(t) + \int_{\mb{R}} \beta(t, X(t^-), \zeta) \tilde{N}(dt, d\zeta) -d\xi(t) \\[\smallskipamount]
X(0) &=& x
\end{array}
\end{equation}

\noindent (we suppress the $\omega$ for notational simplicity). \\
Here, $\xi(t) := \xi(t,\omega)$ is the stochastic consumption process, assumed to be cadlag and non-decreasing and satisfying $\xi(0) = 0$. This control $\xi$ is to be chosen from a set of admissible controls, $\mc{A}$. We let $\mc{A}$ be the set of all finite variation stochastic processes $\xi$ which are adapted, c{\`a}dl{\`a}g and with increasing components as well as satisfying $\xi(0^-) = 0$\\

Associated to $\xi$ we introduce a \emph{singular recursive utility process} $Y(t)$ represented by a \emph{singular backward stochastic differential equation (BSDE)} with jumps, as follows. \\

\begin{definition}
Let $g(t,y,\xi,\omega): [0,T] \times \mb{R} \times \mb{R} \times \Omega \rightarrow \mb{R}$ be a given predictable function, Lipschitz wrt. $y$ and $\xi$ and uniformly continuous wrt. $t$, called the \emph{driver}. Also, let $h(x,\omega): \mb{R} \times \Omega \rightarrow \mb{R}$ be a given bounded $\mathcal{F}_T$-measurable random variable for each $x$, called a \emph{terminal time payoff function}. Then we define the \emph{singular recursive utility process} $Y(t)=Y^{\xi}(t)$ with respect to $\xi$ as the first component of the solution $(Y(t), Z(t), K(t,\cdot))$ of the following singular backward stochastic differential equation:

\begin{equation}
\begin{array}{lll}
\label{eq: BSDE_Y}
 dY(t) &=& -g(t,Y(t),\xi(t))d\xi(t) + Z(t)dB(t) + \int_{\mb{R}} K(t^-, \zeta) \tilde{N}(dt, d\zeta) \mbox{ for } t \leq T, \\[\smallskipamount]
 Y(T) &=& h(X^{\xi}(T)).
\end{array}
\end{equation}

\end{definition}


For more on (non-singular) BSDEs with jumps, see for instance {\O}ksendal and Sulem (2007).

We would like to solve the following optimal consumption problem with respect to singular recursive utility, for a given \emph{driver} $g$ and a given terminal payoff function $h$:
\vskip 0.5cm

$${\bf{PROBLEM}}$$

Find $\xi^* \in \mc{A}$ such that
\begin{equation}
 \label{eq: opt_cont_problem}
\sup_{\xi \in \mc{A}} Y^{\xi}(0) = Y^{\xi^*}(0).
\end{equation}

In other words, we would like to maximize the singular recursive utility of the control $\xi$.

\section{Singular BSDE with drift term}

Let $g: \mb{R}_+ \times \mb{R} \times \mb{R} \times \Omega \rightarrow \mb{R}$ and $b : \mb{R}_+ \times \mb{R} \times \Omega$ be functions which are bounded for $t \in [0,T]$. Consider the following singular BSDE with drift:

\begin{equation}
 \label{eq: singular_BSDE_2igjen_NY}
\begin{array}{lll}
dY(t) &=& g(t,Y(t), \xi(t))d\xi(t) + b(t, Y(t))dt - Z(t)dB(t) \\[\smallskipamount]
&& \quad \quad \quad -  \int_{\mb{R}} K(t^-, \zeta) \tilde{N}(dt, d\zeta) \mbox{ for } t \leq T \\[\smallskipamount]
 Y(T) &=& X_{\xi}.
\end{array}
\end{equation}
\noindent where $\xi$ is a given singular process and $X_{\xi}$ is a given $\mc{F}_T$-measurable random variable, which may depend on $\xi$, such that $E[|X_{\xi}|] < \infty$.








\begin{theorem}
\label{thm: existence_uniqueness_sing_BSDE}
(Existence and uniqueness of solution to the singular BSDE with drift) Assume the following Lipschitz-type assumption: There exists constants $C_1, C_2 > 0$ such that for any two stochastic processes $Y_1, Y_2$,
\[
 E[\int_t^T |g(s,Y_1(s), \xi(s)) - g(s, Y_2(s), \xi(s))| d\xi(s)] \leq C_1  E[\int_t^T E[|Y_1(s) - Y_2(s)|] d\xi(s)]
\]

\noindent and

\[
 E[\int_t^T |b(s,Y_1(s)) - b(s, Y_2(s))| ds] \leq C_2  \int_t^T E[|Y_1(s) - Y_2(s)|] ds.
\]

Also, assume that $E[X_{\xi} + \int_0^T g(s, 0, \xi(s)) d \xi(s) + \int_0^T b(s, Y_s) ds | \mc{F}_t]$ is square integrable and that $\xi$ is continuous. Then, there exists a unique solution $(Y,Z,K)$ of the singular BSDE with drift~\eqref{eq: singular_BSDE_2igjen_NY}.
\end{theorem}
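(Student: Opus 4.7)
The plan is to recast \eqref{eq: singular_BSDE_2igjen_NY} as a fixed-point problem for a map $\Phi$ on a suitable Banach space of adapted processes and then apply the Banach contraction principle to a high enough iterate $\Phi^n$. Concretely, given a candidate process $y$, I would first form the martingale
\[
M_y(t) := E\Big[X_{\xi} - \int_0^T g(s, y(s), \xi(s))\,d\xi(s) - \int_0^T b(s, y(s))\,ds \,\Big|\, \mc{F}_t\Big],
\]
which is square integrable by the hypothesis on the data. The martingale representation theorem for Brownian motion and compensated Poisson random measures (cf. {\O}ksendal and Sulem (2007)) then furnishes adapted processes $(Z,K)$ such that
\[
M_y(t) = M_y(0) + \int_0^t Z(s)\,dB(s) + \int_0^t\!\!\int_{\mb{R}} K(s^-,\zeta)\,\tilde{N}(ds,d\zeta).
\]
Setting $\Phi(y)(t) := M_y(t) + \int_0^t g(s,y(s),\xi(s))\,d\xi(s) + \int_0^t b(s,y(s))\,ds$ produces a process $Y$ together with companion processes $(Z,K)$ which satisfy \eqref{eq: singular_BSDE_2igjen_NY} precisely when $y$ is a fixed point of $\Phi$.

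Next I would derive the basic contraction estimate. For two candidates $y_1, y_2$, set $Y_i := \Phi(y_i)$, $\delta Y := Y_1 - Y_2$, $\delta y := y_1 - y_2$, and $\varphi_w(s) := E[|w(s)|]$. Jensen's inequality applied to the conditional-expectation representation of $Y_i(t)$ gives
\[
\varphi_{\delta Y}(t) \leq E\!\int_t^T |g(s,y_1,\xi) - g(s,y_2,\xi)|\,d\xi(s) + E\!\int_t^T |b(s,y_1) - b(s,y_2)|\,ds,
\]
and the two Lipschitz hypotheses in the theorem upgrade this to a bound in terms of $\varphi_{\delta y}$. Since $\varphi_{\delta y}$ is deterministic, Fubini converts the singular term into $\int_t^T \varphi_{\delta y}(s)\,d\mu(s)$ with $\mu(ds) := d\,E[\xi(s)]$ a finite deterministic measure on $[0,T]$, yielding
\[
\varphi_{\delta Y}(t) \leq \int_t^T \varphi_{\delta y}(s)\,d\lambda(s), \qquad \lambda := C_1\mu + C_2\,ds.
\]

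Iterating this inequality $n$ times produces the standard factorial bound
\[
\varphi_{\Phi^n(y_1) - \Phi^n(y_2)}(t) \leq \frac{\lambda([t,T])^n}{n!}\,\sup_{s \in [0,T]} \varphi_{\delta y}(s),
\]
so for $n$ sufficiently large $\Phi^n$ is a strict contraction in the norm $\|y\| := \sup_{t \in [0,T]} \varphi_y(t)$. The Banach fixed-point theorem then delivers a unique fixed point $Y$ of $\Phi$, and the companion processes $(Z,K)$ are uniquely determined by the martingale representation step, giving existence and uniqueness of the triple.

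The main obstacle is that the singular $d\xi$ driver precludes any direct appeal to classical Gronwall bounds or to the standard weighted $L^2$-norms used for Lipschitz BSDEs. My remedy is to push the singular integration onto the deterministic side via Fubini, replacing $d\xi$ by the finite deterministic measure $d\mu$; the assumed continuity of $\xi$ is useful here because it ensures $\mu$ is atomless, and the factorial decay of the Picard iteration then absorbs the possibly large total mass $\lambda([0,T])$. A secondary technical point is verifying that $\Phi$ actually maps the chosen function space into itself, which follows from the square-integrability assumption on the data combined with the Lipschitz bounds after controlling $g(s,y,\xi)$ and $b(s,y)$ in terms of $g(s,0,\xi)$, $b(s,0)$, and $y$ itself.
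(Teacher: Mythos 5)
Your proposal is correct and follows essentially the same route as the paper: a Picard/fixed-point iteration in the norm $\sup_{t}E[|y(t)|]$, with factorial decay of the iterated $d\xi$- and $dt$-integrals giving convergence, and the martingale representation theorem producing the companion processes $(Z,K)$. Your explicit Fubini reduction of $E[\int_t^T\varphi(s)\,d\xi(s)]$ to integration against the deterministic atomless measure $dE[\xi(s)]$ is a cleaner justification of the simplex bound $\lambda([t,T])^n/n!$ than the paper's appeal to the semimartingale It\^{o} formula, and your Banach fixed-point packaging delivers uniqueness directly where the paper runs a separate Gr\"{o}nwall argument.
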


\begin{proof}
Define a sequence $\{Y_t^n\}_n$, $n \geq 0$, by $Y_t^0 = 0$ and

\[
 Y_t^{n+1} = E[X_{\xi} + \int_t^T g(s, Y_s^n, \xi(s)) d\xi(s) + \int_t^T b(s, Y_s^n) ds| \mc{F}_t].
\]

Then,

\begin{equation}
 \label{eq: calculations_use_again_NY}
\begin{array}{llll}
 \varphi_{n+1}(t) &:=& E[|Y_t^{n+1} - Y_t^{n}|] \\[\smallskipamount]
&=& E[|E[\int_t^T \{g(s,Y_s^n, \xi(s)) - g(s, Y_s^{n-1}, \xi(s)) \} d \xi(s) \\[\smallskipamount]
&&+ \int_t^T \{b(s,Y_s^n) - b(s, Y_s^{n-1}) ds\} | \mc{F}_t]|] \\[\smallskipamount]
&\leq& E[E[\int_t^T |g(s,Y_s^n, \xi(s)) - g(s, Y_s^{n-1}, \xi(s))| d \xi(s) \\[\smallskipamount]
&&+\int_t^T |b(s,Y_s^n) - b(s, Y_s^{n-1})| ds| \mc{F}_t]]
  \\[\smallskipamount]
&=& E[\int_t^T |g(s,Y_s^n, \xi(s)) - g(s, Y_s^{n-1}, \xi(s))| d \xi(s) \\[\smallskipamount]
&&+\int_t^T |b(s,Y_s^n) - b(s, Y_s^{n-1})| ds]
  \\[\smallskipamount]
&\leq& C_1 E[\int_t^T E[|Y_s^n - Y_s^{n-1}|] d \xi(s)] + C_2 \int_t^T E[|Y_s^n - Y_s^{n-1}|] ds \\[\smallskipamount]
&=& C_1 E[\int_t^T \varphi_n(s) d \xi(s)] + C_2 \int_t^T \varphi_n(s) ds
\end{array}
\end{equation}

\noindent where the third equality follows from the rule of double expectation, the first inequality follows from the Minkowski inequality and that $\xi$ has increasing components and the second inequality from the Lipschitz assumptions.

Then, by iterating the previous inequality, we find that for all $t$

\[
\begin{array}{llll}
\varphi_{n+1}(t) &\leq& \sup_{t \in [0,T]} \varphi_1(t) \sum_{m=0}^n C_1^{n-m} C_2^{m} \frac{E[\xi(T)]^{n-m} T^m}{n!} \\[\medskipamount]
&=& \frac{\sup_{t \in [0,T]} \varphi_1(t)}{n!} (C_1 E[\xi(T)])^n \sum_{m=0}^n (\frac{C_2 T}{C_1 E[\xi(T)]})^m \\[\medskipamount]
&=& \frac{\sup_{t \in [0,T]} \varphi_1(t)}{n!} (C_1 E[\xi(T)])^n \frac{1 - (\frac{C_2 T}{C_1 E[\xi(T)]})^{n+1}}{1 - \frac{C_2 T}{C_1 E[\xi(T)]}}
\end{array}
\]
\noindent by summing the finite geometric series. Here, the inequality for the iterated $d\xi$-integrals follows from the It{\^o} lemma for semimartingales, see e.g. Protter~\cite{Protter}, Theorem 3.2. This means that $Y^n$, $n \geq 1$, is a Cauchy sequence (since factorials grow faster than exponentials). Let $\hat{Y} := \lim_{n \rightarrow \infty} Y^n$.

Now, let $n \rightarrow \infty$ in the definition of $Y^n_t$. Then,

\[
 \hat{Y}_t = \lim_{n \rightarrow \infty} Y^n_t =E[X_{\xi} + \int_t^T g(s, \hat{Y}_s, \xi(s)) d\xi(s) + \int_t^T b(s, \hat{Y}_s) ds | \mc{F}_t].
\]

We would like to show that $\hat{Y}_t$ has a right continuous version which is the solution of the singular BSDE. Let $M_t$ be the right continuous version of the martingale $E[X_{\xi} + \int_0^T g(s, \hat{Y}_s, \xi(s)) d \xi(s) + \int_0^T b(s, \hat{Y}_s) ds| \mc{F}_t]$. Let

\begin{equation}
 \label{eq: y_lik_yhatt_NY}
\begin{array}{lll}
 Y_t &:=& M_t - \int_0^t g(s, \hat{Y}_s, \xi(s)) d\xi(s) - \int_t^T b(s, \hat{Y}_s) ds \\[\smallskipamount]
 &=& E[X_{\xi} + \int_t^T g(s,\hat{Y}_s, \xi(s)) d \xi(s) + \int_t^T b(s, \hat{Y}_s) ds| \mc{F}_t] = \hat{Y}_t \quad \mbox{  $P$-a.s.}
\end{array}
\end{equation}

\noindent and $Y_t$ is right continuous. Then,

\[
 \begin{array}{lll}
Y_t &=& X_{\xi} + M_t - M_T + \int_0^T g(s, \hat{Y}_s, \xi(s)) d\xi(s) - \int_0^t g(s, \hat{Y}_s, \xi(s)) d \xi(s) \\[\smallskipamount]
&& +  \int_0^T b(s, \hat{Y}_s) ds -  \int_0^t b(s, \hat{Y}_s) ds \\[\smallskipamount]
&=& X_{\xi} + \int_t^T g(s, \hat{Y}_s, \xi(s)) d\xi(s) + \int_t^T b(s, \hat{Y}_s) ds - (M_T-M_t) \\[\smallskipamount]
&=& X_{\xi} + \int_t^T g(s, Y_s, \xi(s)) d\xi(s) + \int_t^T b(s, Y_s) ds - (M_T-M_t)
 \end{array}
\]

\noindent where the final equality follows from equation~\eqref{eq: y_lik_yhatt_NY}.

$M_t$ is a martingale and from the assumptions it is square integrable, so  the martingale representation theorem (see {\O}ksendal and Sulem (2007)) implies that there exists processes $Z_t, K_t(\cdot)$ such that

\[
\begin{array}{lll}
 M_t &=& E[M_t] + \int_0^t Z_s dB_s + \int_0^t \int_{\mb{R}}  K_s(\zeta) \tilde{N}(ds, d\zeta) \\[\smallskipamount]
 &=& E[M_0]+ \int_0^t Z_s dB_s + \int_0^t \int_{\mb{R}}  K_s(\zeta) \tilde{N}(ds, d\zeta).
\end{array}
\]

So,

\[
 M_T - M_t = \int_t^T Z_s dB_s + \int_t^T \int_{\mb{R}} K_s(\zeta) \tilde{N}(dt, d\zeta).
\]

Hence,

\[
 \begin{array}{lll}
  dY_t &=& g(t, Y_t) d \xi(t) + b(t, Y(t))dt - Z_t dB_t - \int_{\mb{R}} K(t, \zeta) \tilde{N}(dt, d\zeta),\\[\smallskipamount]
  Y_T &=& X_{\xi}.
 \end{array}
\]

This means that $(Y_t,Z_t, K_t(\cdot))$ solves the singular BSDE~\eqref{eq: singular_BSDE_2igjen_NY}.

We can also prove uniqueness of solution of the singular BSDE with drift: Consider the singular BSDE~\eqref{eq: singular_BSDE_2igjen_NY}. We would like to show that this equation has a unique solution. Let $(Y_1(t), Z_1(t), K_1(t, \cdot))$ and $(Y_2(t), Z_2(t), K_2(t, \cdot))$ be two solutions of equation~\eqref{eq: singular_BSDE_2igjen_NY}. Define

\[
   v(t):= E[|Y_1(t) - Y_2(t)|].                                                                                                                                                                                                                                                                      \]

Then, by the same kind of calculations as in \eqref{eq: calculations_use_again_NY},

\begin{equation}
\label{eq: v_NY}
\begin{array}{lll}
 v(t) &=& E[|Y_1(t) - Y_2(t)|] \\[\smallskipamount]
 &\leq& C_1 E[\int_t^T v(s) d\xi(s)] + C_2 \int_t^T v(s) ds
\end{array}
 \end{equation}
\noindent where we have used the Minkowski inequality, as well as the Lipschitz assumptions.

The inequality~(\ref{eq: v_NY}) implies that

\[
v(t) \leq 2 \max\{C_1, C_2\} \max\{E[\int_t^T v(s) d\xi(s)], \int_t^T v(s) ds\}.
\]

Therefore, by using either the regular or the stochastic (backward) Gr{\"o}nwall inequality (see Lemma 2.1 in Ding and Wu (1998) and Lemma 4.7 in Cohen and Elliot~\cite{Cohen}) depending on the value of the final maximum above, this implies that $v(t) \leq 0$. However, by the definition of $v(t)$, this again implies that $v(t)=0$. Hence, the solution of the singular BSDE~\eqref{eq: singular_BSDE_2igjen_NY} is unique.


%
%

\end{proof}

\section{The linear singular BSDE}
\label{sec: LinSingularBSDE}

Let $\phi, \alpha, \beta, c : \mb{R}_+ \times \Omega \rightarrow \mb{R}$  and $\gamma : \mb{R}_+ \times \mb{R} \times \Omega \rightarrow \mb{R}$ be functions. We would like to solve the singular BSDE in the linear case, i.e. we want to solve

\begin{equation}
\label{eq: lin_sing_BSDE}
 \begin{array}{lll}
  dY_t &=& -g(t, Y_t) d \xi(t) + Z_t dB_t + \int_{\mb{R}} K(t, \zeta) \tilde{N}(dt, d\zeta),\\[\smallskipamount]
  Y_T &=& X_{\xi}.
 \end{array}
\end{equation}

\noindent when

\[
\begin{array}{lll}
g(t, Y(t), \xi(t)) &=& \phi(t) + \alpha(t)Y(t) + c(t)\xi(t).
\end{array}
\]

We have the following theorem:

\begin{theorem}
\label{thm: lin_sing_BSDE}
Assume that $\xi$ is continuous. Then,
\begin{equation}
\label{eq: guess_Y}
Y(t)=E[\frac{\Gamma(T)}{\Gamma(t)}X + \int_t^T \frac{\Gamma(s)}{\Gamma(t)} \big(\phi(s) + c(s)\xi(s)\big) d\xi(s) |\mc{F}_t]
\end{equation}

\noindent is the unique solution of the linear singular BSDE~\eqref{eq: lin_sing_BSDE}, where

\begin{equation}
\label{eq: guess_gamma}
\begin{array}{lll}
d\Gamma(t) &=& \Gamma(t^-)\alpha(t)d\xi(t) \\[\smallskipamount]
\Gamma(0)&=&1.
\end{array}
\end{equation}

\end{theorem}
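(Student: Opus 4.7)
The plan is to use the process $\Gamma$ as an integrating factor: compute $d(\Gamma(t)Y(t))$ by the It{\^o} product formula, show that the $\alpha(t)Y(t)$ terms cancel, integrate from $t$ to $T$, and take conditional expectations to isolate $Y(t)$.

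First I would observe that, since $\xi$ is continuous, equation~\eqref{eq: guess_gamma} admits the explicit continuous finite-variation solution $\Gamma(t)=\exp\bigl(\int_0^t \alpha(s)\,d\xi(s)\bigr)$. In particular $\Gamma$ is continuous and of finite variation, so its quadratic covariation with any semimartingale vanishes; moreover $d\xi$ has no atoms, so for any c{\`a}dl{\`a}g process $U$ one has $\int_0^t U(s^-)\,d\xi(s) = \int_0^t U(s)\,d\xi(s)$. These two facts are what make the product rule clean despite the jumps of $Y$.

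Next I would let $(Y,Z,K)$ be any solution of~\eqref{eq: lin_sing_BSDE} with the specific linear $g$ and apply It{\^o}'s product formula to $\Gamma(t)Y(t)$:
\[
d(\Gamma(t)Y(t))=\Gamma(t)\,dY(t)+Y(t)\,d\Gamma(t),
\]
since $[\Gamma,Y]\equiv 0$. Substituting $dY(t)=-(\phi(t)+\alpha(t)Y(t)+c(t)\xi(t))\,d\xi(t)+Z(t)\,dB(t)+\int_{\mb R}K(t,\zeta)\tilde N(dt,d\zeta)$ and $d\Gamma(t)=\Gamma(t)\alpha(t)\,d\xi(t)$, the two $\alpha(t)\Gamma(t)Y(t)\,d\xi(t)$ terms cancel, leaving
\[
d(\Gamma(t)Y(t))=-\Gamma(t)\bigl(\phi(t)+c(t)\xi(t)\bigr)d\xi(t)+\Gamma(t)Z(t)\,dB(t)+\int_{\mb R}\Gamma(t)K(t,\zeta)\tilde N(dt,d\zeta).
\]
Integrating from $t$ to $T$, using the terminal condition $Y(T)=X$, and taking conditional expectation with respect to $\mc F_t$ kills the two stochastic integrals; dividing by the $\mc F_t$-measurable quantity $\Gamma(t)$ yields exactly the formula~\eqref{eq: guess_Y}.

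For uniqueness I would simply invoke Theorem~\ref{thm: existence_uniqueness_sing_BSDE}: the linear driver $g(t,y,\xi)=\phi(t)+\alpha(t)y+c(t)\xi$ satisfies the required Lipschitz-type estimate with $C_1=\|\alpha\|_\infty$ and $C_2=0$, so the cited theorem already gives a unique solution, which must therefore coincide with the one produced by the formula. The main technical obstacle is justifying that $\int_t^T \Gamma(s)Z(s)\,dB(s)$ and $\int_t^T\int_{\mb R}\Gamma(s)K(s,\zeta)\tilde N(ds,d\zeta)$ are genuine martingales (not merely local) so that their conditional expectations vanish; this I would handle by noting that boundedness of $\alpha$ on $[0,T]$ (together with the standing assumption that $\xi(T)$ has sufficient integrability) makes $\Gamma$ bounded on $[0,T]$, and then the square-integrability of $Z$ and $K$ inherited from $(Y,Z,K)$ being a bona fide solution of the BSDE yields the required martingale property via a standard localization and uniform integrability argument.
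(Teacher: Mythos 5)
Your proof follows essentially the same route as the paper: use $\Gamma$ as an integrating factor, apply the It{\^o} product rule so the $\alpha(t)\Gamma(t)Y(t)\,d\xi(t)$ terms cancel, and take conditional expectations to isolate $Y(t)$. You are in fact somewhat more careful than the paper on two points it leaves implicit — the martingale (rather than local martingale) property of the stochastic integrals, and the uniqueness claim, which you correctly reduce to Theorem~\ref{thm: existence_uniqueness_sing_BSDE} — so the argument is sound.
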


\begin{proof}
We use the same method as in the proof of Theorem 1.7 in {\O}ksendal and Sulem (2007). By It{\^o}'s formula,

\[
\begin{array}{lll}
d(\Gamma(t)Y(t)) &=& \Gamma(t^-)dY(t)+ Y(t^-)d\Gamma(t)+ d[\Gamma, Y](t) \\[\smallskipamount]

&=& \Gamma(t^-) \Big[-\big( \phi(t)+ \alpha(t)Y(t) + c(t)\xi(t) \big) d\xi(t) + Z(t)dB(t) \\[\smallskipamount]
&&+ \int_{\mb{R}} K(t,\zeta) \tilde{N}(dt, d\zeta) \Big] + Y(t^-)\Gamma(t^-) \Big[\alpha(t)d\xi(t) + \beta(t)dB(t) \\[\smallskipamount]
&&+ \int_{\mb{R}} \gamma(t,\zeta) \tilde{N}(dt, d\zeta) \Big] \\[\smallskipamount]

&=& \Gamma(t^-) \Big[-\big( \phi(t) + c(t)\xi(t) \big) d\xi(t) + Z(t)dB(t) \\[\smallskipamount]
&&+ \int_{\mb{R}} K(t,\zeta) \tilde{N}(dt, d\zeta) + Y(t^-) \big(\beta(t)dB(t) \\[\smallskipamount]
&&+ \int_{\mb{R}} \gamma(t,\zeta) \tilde{N}(dt, d\zeta) \big) \Big]
\end{array}
\]

So $\Gamma(t)Y(t) + \int_0^t \Gamma(t) \big( \phi(t) + c(t)\xi(t) \big) d\xi(t)$ is a martingale. Hence,

\[
\begin{array}{lll}
\Gamma(t)Y(t) + \int_0^t \Gamma(s) \big( \phi(s) + c(s)\xi(s) \big) d\xi(s) \\[\smallskipamount]
\hspace{1.5cm}= E \Big[ \Gamma(T)Y(T) + \int_0^T \Gamma(s) \big( \phi(s) + c(s)\xi(s) \big) d\xi(s) | \mc{F}_t \Big].
\end{array}
\]

Therefore,

\[
\Gamma(t)Y(t) = E \Big[ \Gamma(T)X_{\xi} + \int_t^T \Gamma(s) \big( \phi(s) + c(s)\xi(s) \big) d\xi(s) | \mc{F}_t \Big]
\]
\noindent which is the claim of the theorem.
\end{proof}





\section{Maximizing singular recursive utility}
\label{sec: max_sing_rec_utility}

In the following, let $\mb{F}=\{\mc{F}_t\}_{t \in [0,T]}$ be the filtration generated by (only) the Brownian motion. Consider the following forward stochastic differential equation (FSDE):

\begin{equation}
 \label{eq: notat_(1)}
 \begin{array}{lll}
 dX(t) &=& b(t, X(t))dt + \sigma(t, X(t))dB(t) + \theta(t,X(t)) d \xi(t); \\[\smallskipamount]
 X(0) &=& x \in \mb{R}.
 \end{array}
\end{equation}
\noindent where we assume that the functions $b, \theta, \sigma$ are differentiable w.r.t. $x$ (the second component) with bounded derivatives for $t \in [0,T]$. 

Also, consider the singular backward stochastic differential equation (SBSDE):

\begin{equation}
 \label{eq: notat_(2)}
 \begin{array}{lll}
 dY(t) &=&-g_1(t,X(t),Y(t),Z(t),\xi(t))dt -g_2(t,Y(t), \xi(t))d \xi(t) + Z(t)dB(t); \\[\smallskipamount]
 Y(T) &=& h(X(T))
 \end{array}
\end{equation}
\noindent where $h : \mb{R} \rightarrow \mb{R}$ is a given concave, $C^1$ function, differentiable with bounded derivative on $[0,T]$. Also, the functions $g_1, g_2$ are assumed to be bounded for $t \in [0,T]$, differentiable wrt. $x,y,z$ and $y$ respectively with bounded partial derivatives (for $t \in [0,T]$).

The \emph{singular recursive utility functional} is defined by

\begin{equation}
 \label{eq: notat_(3)}
 \begin{array}{lll}
  J(\xi) = E[\int_0^T f(t, X(t)) dt + \varphi(X(T))] + \psi(Y(0))
 \end{array}
\end{equation}
\noindent where $\varphi$ and $\psi$ are given concave, $C^1$ functions, and the function $f$ is bounded for $t \in [0,T]$ as well as differentiable wrt. x with a bounded partial derivative.

The problem is to find a $\xi^* \in \mc{A}$ and $\Phi$ such that

\begin{equation}
 \label{eq: notat_(4)}
 \Phi := \sup_{\xi \in \mc{A}} J(\xi) = J(\xi^*).
\end{equation}

This is a \emph{singular forward-backward SDE (SFBSDE) control problem}. To the best of our knowledge this type of problem has not been studied before. Some related papers are the following: In {\O}ksendal and Sulem (2014) and {\O}ksendal and Sulem (2015), maximum principles for optimal control of \emph{non-singular} FBSDEs are established. In {\O}ksendal and Sulem (2012) and Hu et al. (2014), maximum principles for singular control are proved, but the results do not apply to the singular recursive utility case given in equation~\eqref{eq: notat_(2)}.

Our present paper is combining ideas from these and related papers to establish maximum principles for optimal control of a coupled system of FBSDEs.

To this end, define the Hamiltonian $\mathcal{H}$ by

%

\begin{align}
 \label{eq: notat_(6)}
\mathcal{H}(t,x,y,z,\xi,p,q,\lambda)(dt,d\xi)
=H_1(t,x,y,z,\xi,p,q,\lambda) dt + H_2(t,x,y,\xi)d\xi(t)
\end{align}
where
\begin{align}
&H_1(t,x,y,z,\xi,p,q,\lambda)= f(t,x) + b(t,x)p +  \sigma(t,x)q+
 \lambda g_1(t,x,y,z,\xi)\\
&H_2(t,x,y,\xi,p,\lambda) = p\theta(t,x) + \lambda g_2(t,y,\xi).
 \end{align}

The equations for the adjoint variables, which are a kind of generalized Lagrange multipliers, $p(t), q(t), \lambda(t)$ are:
\begin{itemize}
\item
BSDE for $p(t), q(t)$:

\begin{equation}
 \label{eq: notat_(7)}
 \begin{array}{llll}
  dp(t) &=& -\frac{\partial H_1(t)}{\partial x}(t)dt -\frac{\partial H_2(t)}{\partial x}(t)d\xi(t)+ q(t)dB(t); \mbox{ } 0 \leq t \leq T \\[\smallskipamount]
  p(T) &=& \varphi'(X(T)) + \lambda(T)h'(X(T)).
 \end{array}
\end{equation}

\item
FSDE for $\lambda(t)$:

\begin{equation}
 \label{eq: notat_(8)}
 \begin{array}{llll}
  d\lambda(t) &=& \frac{\partial H_1}{\partial y}(t)dt+ \frac{\partial H_2}{\partial y}(t)d\xi(t)+\frac{\partial H_1}{\partial z}(t)dB(t)
   ; \mbox{ } 0 \leq t \leq T \\[\smallskipamount]
  \lambda(0) &=& \psi'(Y(0)).
 \end{array}
\end{equation}
\end{itemize}

In the general case, existence and uniqueness of solutions of systems for singular FBSDEs such as~\eqref{eq: notat_(7)}-\eqref{eq: notat_(8)} is not known, see also Remark~\ref{remark: 5.2}.

Then, the following maximum principle holds:

\begin{theorem}
\label{eq: Suff_Max_Princ}
(Sufficient maximum principle for optimal singular recursive utility control)

Let $\hat{\xi} \in \mc{A}$, with associated solutions $\hat{X}(t), \hat{Y}(t), \hat{Z}(t), \hat{p}(t), \hat{q}(t), \hat{\lambda}(t)$ of the coupled FBSDE system \eqref{eq: notat_(1)}-\eqref{eq: notat_(2)} and \eqref{eq: notat_(7)}-\eqref{eq: notat_(8)}. Assume the following:
\begin{itemize}
\item{Continuity:}
$\hat{\xi}(t)$ is continuous
\item{Concavity:}
The functions $\varphi$ and $ \psi$ are $C^1$ and concave, $\psi' \geq 0$, and for each $t$ the map
\begin{align}
 \label{eq: notat_(9)}
 &(x,y,z,\xi) \mapsto
  \mathcal{H}(t,x,y,z,\xi,\hat{p}(t), \hat{q}(t),\hat{\lambda}(t))(dt,d\xi(t))
\end{align}
is concave.
\item{Variational inequality:}
\begin{align}
 \label{eq: notat_(10)}
 &\sup_{\xi} \mathcal{H}(t, \hat{X}(t),\hat{Y}(t), \hat{Z}(t),\xi, \hat{p}(t), \hat{q}(t),\hat{\lambda}(t))(dt,d\xi)\nonumber\\
 & = \mathcal{H}(t, \hat{X}(t),\hat{Y}(t), \hat{Z}(t),\hat{\xi}(t), \hat{p}(t), \hat{q}(t),\hat{\lambda}(t))(dt,d\hat{\xi}(t))
\end{align}
i.e.
 \begin{align}
 \label{eq: notat_10_2}
 & \hat{\lambda}(t)g_1(t,\hat{X}(t),\hat{Y}(t),\hat{Z}(t),\xi)dt+\big( \hat{p}(t)\theta(t,\hat{X}(t))+\hat{\lambda}(t)g_2(t,\hat{Y}(t),\xi(t))\big) d\xi(t) \nonumber\\
 \leq & \hat{\lambda}(t)g_1(t,\hat{X}(t),\hat{Y}(t),\hat{Z}(t),\hat{\xi})dt+\big( \hat{p}(t)\theta(t,\hat{X}(t))+\hat{\lambda}(t)g_2(t,\hat{Y}(t),\hat{\xi}(t))\big) d\hat{\xi}(t)
 \end{align}
 for all $\xi$ (where the differential inequalities means that the corresponding inequalities hold when integrated).
\end{itemize}
Then $\hat{\xi}$ is an optimal control for the problem~\eqref{eq: notat_(4)}.

\end{theorem}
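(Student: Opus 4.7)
The plan is to show $J(\hat\xi) - J(\xi) \geq 0$ directly for an arbitrary admissible $\xi \in \mathcal{A}$ with associated state triple $(X, Y, Z)$. First I would expand
\[
J(\hat\xi) - J(\xi) = E\Big[\int_0^T (f(t,\hat X) - f(t,X))\,dt + \varphi(\hat X(T)) - \varphi(X(T))\Big] + \psi(\hat Y(0)) - \psi(Y(0))
\]
and invoke the concavity of $\varphi$ and $\psi$, together with the adjoint initial condition $\hat\lambda(0) = \psi'(\hat Y(0))$, to obtain a linear lower bound in which the two endpoint differences are replaced by $\varphi'(\hat X(T))(\hat X(T) - X(T))$ and $\hat\lambda(0)(\hat Y(0) - Y(0))$ respectively.

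Next I would apply It\^o's formula to the two products $\hat p(t)(\hat X(t) - X(t))$ and $\hat\lambda(t)(\hat Y(t) - Y(t))$ on $[0, T]$, using the FSDE--SBSDE dynamics for $\hat X - X$ and $\hat Y - Y$ together with the adjoint equations \eqref{eq: notat_(7)}--\eqref{eq: notat_(8)}. Taking expectations kills the Brownian martingale pieces, and the continuity of $\hat\xi$ removes cross-jump contributions from the quadratic covariations. Substituting the terminal condition $\hat p(T) = \varphi'(\hat X(T)) + \hat\lambda(T) h'(\hat X(T))$, together with $\hat Y(T) - Y(T) = h(\hat X(T)) - h(X(T))$ and the concavity of $h$ applied pointwise --- which requires $\hat\lambda(T) \geq 0$, justified by noting that $\hat\lambda$ solves a linear Dol\'eans-type SDE with nonnegative initial value $\psi'(\hat Y(0)) \geq 0$ --- I would rewrite the two endpoint linearization terms as $dt$-, $d\hat\xi$- and $d\xi$-integrals of the partial derivatives $\partial H_1/\partial x$, $\partial H_1/\partial y$, $\partial H_1/\partial z$, $\partial H_2/\partial x$, $\partial H_2/\partial y$ multiplied by the state differences, plus raw Hamiltonian terms.

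After this rearrangement, the $f$-term from the first step combines with the $b$, $\sigma$ and $\theta$ pieces to reconstitute the Hamiltonian differences $H_1(\hat X, \hat Y, \hat Z, \hat\xi) - H_1(X, Y, Z, \xi)$ under $dt$ and $H_2(\hat X, \hat Y, \hat\xi)\,d\hat\xi - H_2(X, Y, \xi)\,d\xi$, each minus first-order Taylor terms in $(x, y, z)$ expanded about the optimal point. The postulated concavity of $(x, y, z, \xi) \mapsto \mathcal{H}(t, x, y, z, \xi, \hat p, \hat q, \hat\lambda)(dt, d\xi)$ then bounds these residuals from below by $\mathcal{H}$ evaluated at $\hat\xi$ minus $\mathcal{H}$ evaluated at $\xi$ (both at the optimal $(\hat X, \hat Y, \hat Z)$), and the variational inequality \eqref{eq: notat_(10)} forces the result to be $\geq 0$.

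I expect the main obstacle to be the careful bookkeeping of the three distinct integrators ($dt$, $d\xi$, $d\hat\xi$) in the product rule for $\hat p(\hat X - X)$: $d\xi$ enters through $dX$, while $d\hat\xi$ enters through both $d\hat p$ and $d\hat X$, so one must segregate the $dt$-pieces, align them with $H_1$, and group the $d\hat\xi$-pieces with $H_2$ so that the Hamiltonian comparison closes cleanly. The continuity hypothesis on $\hat\xi$ is essential in this bookkeeping, since it eliminates jump-jump cross terms that would otherwise appear in the product-rule quadratic-variation contributions.
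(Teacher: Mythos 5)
Your proposal follows essentially the same route as the paper's proof: the same three-term decomposition of $J(\xi)-J(\hat\xi)$, linearization via concavity of $\varphi$ and $\psi$, It\^o's product rule applied to $\hat p\,\tilde X$ and $\hat\lambda\,\tilde Y$ with the adjoint terminal/initial conditions, reassembly into Hamiltonian differences, and the final appeal to concavity of $\mathcal{H}$ plus the variational inequality. Your explicit justification that $\hat\lambda \geq 0$ (via the Dol\'eans-type linear SDE with $\psi'\geq 0$), needed for the pointwise use of the concavity of $h$, is a detail the paper uses only implicitly, and is a welcome addition; the only step you treat more casually than the paper is the vanishing of the stochastic-integral expectations, which the paper handles by localization with stopping times and dominated convergence.
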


\begin{proof}
Choose $\xi \in \mc{A}$ and consider, with $\hat{X}(t) = X^{\hat{\xi}}(t)$ etc.

 \begin{equation}
  \label{eq: notat_(11)}
  J(\xi) - J(\hat{\xi}) = J_1 + J_2 + J_3,
 \end{equation}
\noindent where

\[
 \begin{array}{lll}
  J_1 &=& E[\int_0^T \{f(t) - \hat{f}(t)\} dt] \mbox{ where } \hat{f}(t) := f(t, \hat{X}(t)), \\[\smallskipamount]

  J_2 &=& E[\varphi(X(T)) - \varphi(\hat{X}(T))] \\[\smallskipamount]

  J_3 &=& \psi(Y(0)) - \psi(\hat{Y}(0)); \mbox{ where } Y(0) =  Y^{\xi}(0), \hat{Y}(0) = Y^{\hat{\xi}}(0).
 \end{array}
\]

By the definition of $H_1$ we have, with $\tilde{b} := b-\hat{b}$ etc.,

\begin{equation}
 \label{eq: notat_(12)}
 \begin{array}{llll}
  J_1 =  E[\int_0^T \big( H_1(t) - \hat{H}_1(t)
 - \hat{p}(t)\tilde{b}(t) - \hat{q}(t) \tilde{\sigma}(t) -\hat{\lambda}(t) \tilde{g}_1(t))
 dt]
 \end{array}
\end{equation}
where we  have used the shorthand notation
\begin{align}
&H_1(t)=H_1(t,X(t),Y(t),Z(t),\xi(t),\hat{p}(t),\hat{q}(t),\hat{\lambda}(t)), \nonumber\\
&\hat{H}_1(t)=H_1(t,\hat{X}(t),\hat{Y}(t),\hat{Z}(t),\hat{\xi}(t),\hat{p}(t),\hat{q}(t),\hat{\lambda}(t)). \\
\end{align}
By concavity of $\varphi$ and the It{\^o} formula, we have

\begin{equation}
 \label{eq: notat_(13)_1}
 \begin{array}{lll}
 J_2 &\leq& E[\varphi'(\hat{X}(T))\tilde{X}(T)] = E[\{\hat{p}(T) - \hat{\lambda}(T)h'(\hat{X}(T))\} \tilde{X}(T)] \\[\smallskipamount]

 &=& E[\int_0^T \hat{p}(t) \big( \tilde{b}(t)dt + \tilde{\sigma}(t)dB(t) + \theta(t) d \xi(t) -\hat{\theta}(t) d\hat{\xi}(t) \big) \\[\smallskipamount]
 && + \int_0^T \tilde{X}(t) \big( -\frac{\partial \hat{H}_1}{\partial x}(t) dt -\frac{\partial \hat{H}_2}{\partial x}(t) d\xi(t)+ \hat{q}(t)dB(t) \big) + \int_0^T \hat{q}(t) \tilde{\sigma}(t) dt ] \\[\smallskipamount]
 &&- E[\hat{\lambda}(T)h'(\hat{X}(T))\tilde{X}(T)]. \\[\smallskipamount]
 \end{array}
 \end{equation}

 Consider an increasing sequende of stopping times $\tau_n$ defined by

 \[
  \tau_n := T \wedge \inf \{t > 0: \int_0^t \big[(\hat{p}(s)\tilde{\sigma}(s))^2 + (\tilde{X}(s) \hat{q}(s))^2 \big] ds \geq n\}.
 \]

Note that the sequence $\{\tau_n\}_{n=1}^{\infty}$ conveges to $T$ as $n \rightarrow \infty$. Since It{\^o} integrals with $L^2$ integrands have expectation zero, it follows from \eqref{eq: notat_(13)_1} that

\begin{equation}
 \label{eq: localize}
\begin{array}{llll}
 E[\varphi'(\hat{X}(\tau_n))\tilde{X}(\tau_n)] &=& E[\int_0^{\tau_n} \hat{p}(t) \{ \tilde{b}(t)dt + \theta(t) d \xi(t) - \hat{\theta}(t)d\hat{\xi}(t) \} \\[\smallskipamount]
 && - \int_0^{\tau_n} \tilde{X}(t)  \frac{\partial \hat{H}_1}{\partial x}(t) dt - \int_0^{\tau_n} \tilde{X}(t)  \frac{\partial \hat{H}_2}{\partial x}(t) d\xi(t)+ \int_0^{\tau_n} \hat{q}(t) \tilde{\sigma}(t) dt ] \\[\smallskipamount]
 &&- E[\hat{\lambda}(\tau_n)h'(\hat{X}(\tau_n))\tilde{X}(\tau_n)].
\end{array}
\end{equation}

By passing to the limit in \eqref{eq: localize}, and using the dominated convergence theorem (which can be applied due to our assumptions on the coefficient functions), we find that

\begin{equation}
 \label{eq: localize_2}
 \begin{array}{lll}
 E[\varphi'(\hat{X}(T))\tilde{X}(T)] &=&  E[\int_0^T \hat{p}(t) \{ \tilde{b}(t)dt + \theta(t) d \xi(t) - \hat{\theta}(t)d\hat{\xi}(t) \} \\[\smallskipamount]
 && - \int_0^T \tilde{X}(t)  \frac{\partial \hat{H}_1}{\partial x}(t) dt - \int_0^T \tilde{X}(t)  \frac{\partial \hat{H}_2}{\partial x}(t) d\xi(t)+ \int_0^T \hat{q}(t) \tilde{\sigma}(t) dt ] \\[\smallskipamount]
 &&- E[\hat{\lambda}(T)h'(\hat{X}(T))\tilde{X}(T)].
 \end{array}
 \end{equation}

By combining \eqref{eq: notat_(13)_1} and \eqref{eq: localize_2}, we find that

\begin{equation}
 \label{eq: notat_(13)}
 \begin{array}{lll}
 J_2 &\leq& E[\int_0^T \hat{p}(t) \{ \tilde{b}(t)dt + \theta(t) d \xi(t) - \hat{\theta}(t)d\hat{\xi}(t) \} \\[\smallskipamount]
 && - \int_0^T \tilde{X}(t)  \frac{\partial \hat{H}_1}{\partial x}(t) dt - \int_0^T \tilde{X}(t)  \frac{\partial \hat{H}_2}{\partial x}(t) d\xi(t)+ \int_0^T \hat{q}(t) \tilde{\sigma}(t) dt ] \\[\smallskipamount]
 &&- E[\hat{\lambda}(T)h'(\hat{X}(T))\tilde{X}(T)].
 \end{array}
\end{equation}

By concavity of $\psi$ and the It{\^o} product rule we get,

\begin{equation}
 \label{eq: notat_(14)_1}
 \begin{array}{lll}
  J_3 &=& \psi(Y(0)) - \psi(\hat{Y}(0)) \leq \psi'(\hat{Y}(0)) \tilde{Y}(0) \\[\smallskipamount]

  &=& \hat{\lambda}(0) \tilde{Y}(0) = E[\hat{\lambda}(T)\tilde{Y}(T)] \\[\smallskipamount]
  && - E[\int_0^T \tilde{Y}(t) d \hat{\lambda}(t) + \int_0^T \hat{\lambda}(t) d \tilde{Y}(t)+\int_0^T \frac{\partial \hat{H}_1}{\partial z}(t)\tilde{Z}(t)dt]. \\[\smallskipamount]

 \end{array}
\end{equation}

Again, consider an increasing sequence of stopping times $\bar{\tau}_n$ defined by

\[
 \bar{\tau}_n := T \wedge \inf \{t > 0: \int_0^t \big[(\tilde{Y}(s)\frac{\partial \hat{H}_1}{\partial z}(s))^2 + (\hat{\lambda}(s)\tilde{Z}(s))^2 \big] ds \geq n\}.
\]

Similarly as before, the sequence $\{\bar{\tau}_n\}_{n=1}^{\infty}$ conveges to $T$ as $n \rightarrow \infty$. Since It{\^o} integrals with $L^2$ integrands have expectation zero, it follows from \eqref{eq: notat_(14)_1}, the It{\^o} formula and the concavity of $h$ that

\begin{equation}
\label{eq: notat_(14)_2}
 \begin{array}{lll}
  J_3 &\leq& -E[\int_0^{\bar{\tau}_n}  \frac{\partial \hat{H}_1}{\partial y}(t) \tilde{Y}(t)dt+ \frac{\partial \hat{H}_1}{\partial z}(t)\tilde{Z}(t)dt +\frac{\partial \hat{H}_2}{\partial y}(t) \tilde{Y}(t)d\hat{\xi}(t))] \\[\smallskipamount]

  &- &E[\int_0^{\bar{\tau}_n} \hat{\lambda}(t)\big(\tilde{g}_1(t)dt+ g_2(t) d \xi(t) - \hat{g}_2(t) d \hat{\xi}(t)\big)] \\[\smallskipamount]

  &&  + E[\hat{\lambda}(\bar{\tau}_n) h'(\hat{X}(\bar{\tau}_n)) \tilde{X}(\bar{\tau}_n)].

 \end{array}
\end{equation}

By letting $n \rightarrow \infty$ in \eqref{eq: notat_(14)_2} and using the dominated convergence theorem (this can be applied due to our assumptions on the coefficient functions, which implies that $E[|\tilde{Y}|], E[|\tilde{Z}|] < \infty$ by the proof of Theorem~\ref{thm: existence_uniqueness_sing_BSDE}), we see that

 \begin{equation}
\begin{array}{llll}
\label{eq: notat_(14)}
  J_3 &\leq& -E[\int_0^{T}  \frac{\partial \hat{H}_1}{\partial y}(t) \tilde{Y}(t)dt+ \frac{\partial \hat{H}_1}{\partial z}(t)\tilde{Z}(t)dt +\frac{\partial \hat{H}_2}{\partial y}(t) \tilde{Y}(t)d\hat{\xi}(t))] \\[\smallskipamount]

  &- &E[\int_0^{T} \hat{\lambda}(t)\big(\tilde{g}_1(t)dt+ g_2(t) d \xi(t) - \hat{g}_2(t) d \hat{\xi}(t)\big)] \\[\smallskipamount]

  &&  + E[\hat{\lambda}(T) h'(\hat{X}(T) \tilde{X}(T)].
\end{array}
\end{equation}

Adding \eqref{eq: notat_(12)}, \eqref{eq: notat_(13)} and \eqref{eq: notat_(14)} we get, by concavity of $\mathcal{H}$,

\begin{equation}
 \begin{array}{llll}
 & J(\xi) - J(\hat{\xi}) \nonumber\\
 &\leq E[\int_0^T \{H_1(t) - \hat{H}_1(t) - \frac{\partial \hat{H}_1}{\partial x}(t) \tilde{X}(t) - \frac{\partial \hat{H}_1}{\partial y}(t) \tilde{Y}(t)- \frac{\partial \hat{H}_1}{\partial z}(t) \tilde{Z}(t)\}dt\nonumber\\
  &+\hat{p}(t)\big( \theta(t)d\xi(t)-\hat{\theta}(t)d\hat{\xi}(t)\big)-\frac{\partial \hat{H}_2}{\partial x}(t) \tilde{X}(t)d\hat{\xi}(t) -\frac{\partial \hat{H}_2}{\partial y}(t) \tilde{Y}(t)d\hat{\xi}(t) \nonumber\\
  &+\hat{\lambda}(t)\big(g_2(t,Y(t),\xi(t))d\xi(t)-g_2(t,\hat{Y}(t),\hat{\xi}(t))d\hat{\xi}(t)\big)
  \nonumber\\
 &=E[\int_0^T \{\mathcal{H}(t) - \hat{\mathcal{H}}(t) - \frac{\partial \hat{\mathcal{H}}}{\partial x}(t) \tilde{X}(t) - \frac{\partial \hat{\mathcal{H}}}{\partial y}(t) \tilde{Y}(t)- \frac{\partial \hat{\mathcal{H}}}{\partial z}(t) \tilde{Z}(t)\}\nonumber\\
 &\leq E[\int_0^T \langle \nabla_{\xi} \hat{\mathcal{H}}(t), \xi(\cdot) - \hat{\xi}(\cdot)\rangle ] \leq 0,

 \end{array}
\end{equation}

\noindent since $\xi = \hat{\xi}$ maximizes $\mathcal{H}$.
Here
\begin{align}
&\langle \nabla_{\xi} \hat{\mathcal{H}}(t), \xi(\cdot) - \hat{\xi}(\cdot)\rangle\nonumber\\
&=\hat{\lambda}(t)[ \frac{\partial \hat{g}_1}{\partial \xi}(t) \big( \xi(t)-\hat{\xi}(t)\big) dt + \frac{\partial \hat{g}_2}{\partial \xi}(t) \big( \xi(t)-\hat{\xi}(t)\big)d\hat{\xi}(t)]+\hat{H}_2(t)(d\xi(t)-d\hat{\xi}(t))
\end{align}
is the action of the gradient (Fr\' echet derivative)  $\nabla_{\xi} \hat{\mathcal{H}}(t)$ of $\mathcal{H}$ on $\xi(\cdot)-\hat{\xi}(\cdot)$, i.e. the directional derivative of $\mathcal{H}$ in the direction $\xi -\hat{\xi}$.
\end{proof}

\begin{remark}
 \label{remark: 5.2}
 Assume that

 \begin{equation}
  \label{eq: 26}
  \begin{array}{lll}
   g_1(t,x,y,x,\xi) &=& g_1(t,x,y,z) \\[\smallskipamount]
   g_2(t,y,\xi) &=& g_2(t,y)
  \end{array}
 \end{equation}
\noindent do not depend on $\xi$.

Then, the variational inequality~\eqref{eq: notat_10_2} is equivalent to the variational inequality

\begin{equation}
 \label{eq: 27}
 \begin{array}{lrlll}
  $(i)$& \mbox{ } \hat{p}(t) \theta(t, \hat{X}(t)) + \hat{\lambda}(t)g_2(t, \hat{Y}(t)) &\leq& 0 \mbox{ for all } t \in [0,T], \\[\smallskipamount]
  $(ii)$& \mbox{ } \{ \hat{p}(t)\theta(t, \hat{X}(t)) + \hat{\lambda}(t)g_2(t,\hat{Y}(t))  \} d \hat{\xi}(t) &=& 0 \mbox{ for all } t \in [0,T].
 \end{array}
\end{equation}

To see this, we first apply \eqref{eq: notat_10_2} to

\[
 d\xi(t) = d\hat{\xi}(t) + d\beta(t)
\]
\noindent where $\beta(t)$ is an increasing continuous adapted process. Then we get

\[
 \{ \hat{p}(t)\theta(t, \hat{X}(t)) + \hat{\lambda}(t)g_2(t,\hat{Y}(t))  \} d \beta(t) \leq 0 \mbox{ for all } t \in [0,T].
\]

Since this holds for all such $\beta$, we deduce that

\begin{equation}
 \label{eq: 28}
 \hat{p}(t)\theta(t, \hat{X}(t)) + \hat{\lambda}(t)g_2(t,\hat{Y}(t)) \leq 0 \mbox{ for all } t \in [0,T].
\end{equation}

On the other hand, if we apply \eqref{eq: notat_10_2} to

\[
 d\xi(t) = \frac{1}{2} d\hat{\xi}(t)
\]
\noindent we get

\begin{equation}
 \label{eq: 29}
 \{ \hat{p}(t)\theta(t, \hat{X}(t)) + \hat{\lambda}(t)g_2(t,\hat{Y}(t))  \} d \hat{\xi}(t) \geq 0 \mbox{ for all } t \in [0,T].
\end{equation}

By combining \eqref{eq: 28} and \eqref{eq: 29} we obtain \eqref{eq: 27}. In particular, \eqref{eq: 27} implies that $\hat{\xi}(t)$ only increases when $\hat{p}(t)\theta(t, \hat{X}(t)) + \hat{\lambda}(t)g_2(t,\hat{Y}(t)) = 0$, and that the corresponding solution $(\hat{X}(t), \hat{Y}(t), \hat{\lambda}(t), \hat{p}(t))$ of the coupled system \eqref{eq: notat_(1)}-\eqref{eq: notat_(2)} and \eqref{eq: notat_(7)}-\eqref{eq: notat_(8)} of forward-backward singular SDEs is \emph{reflected downwards}  at the boundary $\partial G$ of the region

\begin{equation}
 \label{eq: 30}
 G := \{ (t,x,y,\lambda,p) \in \mb{R}^4 ; p \theta(t,x) + \lambda g_2(t,y) < 0 \}.
\end{equation}

Therefore, we see that the optimal singular control $\hat{\xi}$ appears as the \emph{local time} at $\partial G$ of this reflected process. In the special case with just one, possibly multidimensional, singular SDE of the form \eqref{eq: notat_(1)}, the problem to find a solution $(X(t), \xi(t))$ such that

\begin{equation}
 \label{eq: 31}
 \left\{
 \begin{array}{lll}
 X(t) \in \bar{D} \mbox{ for all } t, \\[\smallskipamount]
 \xi \mbox{ is continuous and } \xi \mbox{ increases only when } X(t) \in \partial D
\end{array}
\right.
 \end{equation}

\noindent for a given domain $D$, is called a \emph{Skorohod reflection problem}. The existence and uniqueness of a solution $(\hat{X}(t), \hat{\xi}(t))$ is this case has been proved under certain conditions on the system \eqref{eq: notat_(1)} and the domain $D$. See e.g. Freidlin (1985) and the references therein.

However, for coupled systems of singular forward-backward SDEs, as our system above, the existence and uniqueness of the solution of the corresponding Skorohod reflection problem is not known, to the best of our knowledge. The study of this question is beyond the scope of this paper.

\end{remark}

\section{A necessary maximum principle for singular recursive utility}
\label{sec: NecMaxPrinc}

We can also prove a necessary maximum principle for the singular recursive utility problem. In order to do this we need some additional notation and assumptions:\\
For $\xi \in \mc{A}$ let $\mc{V}(\xi)$ denote the set of $\mathbb{F}$-adapted processes $\beta$ of finite variation such that there exists $\delta=\delta(\xi) > 0$ satisfying
\begin{equation}\label{eq3.11a}
\xi + a \beta \in \mc{A} \text{ for all  } a \in [0,\delta].
\end{equation}

Assume that for all $\xi \in \mc{A}$ and for all $\beta \in \mc{V}(\xi)$ the following derivative processes exist and belong to $L^2([0,T] \times \Omega)$:
\begin{equation}
\label{eq33}
\begin{array}{lll}
x(t) =& \lim_{a \rightarrow 0^+} \frac{X^{\xi + a \beta} - X^{\xi}}{a} (t) \\
y(t) =& \lim_{a \rightarrow 0^+} \frac{ Y^{\xi + a \beta} - Y^{\xi}}{a} (t) \\
z(t) =& \lim_{a \rightarrow 0^+} \frac{ Z^{\xi + a \beta} - Z^{\xi}}{a} (t).
\end{array}
\end{equation}

\begin{remark}
The existence and $L^2$-features of these derivative processes is a non-trivial issue, and we do not discuss conditions for this in our paper. Here we will just assume that these properties hold. We refer to Pr{\'e}v{\^o}t and R{\"o}ckner (2007) for a study of this issue in a related setting.
\end{remark}

Then, by the FSDE~ \eqref{eq: notat_(1)} and the BSDE~\eqref{eq: notat_(2)},

\[
\begin{array}{lll}
dx(t) &=& \frac{\partial b}{\partial x}(t)x(t)dt + \frac{\partial \sigma}{\partial x}(t)x(t)dB(t) + \frac{\partial \theta}{\partial x}(t)x(t)d\xi(t) + \theta(t)d\beta(t), \\[\smallskipamount]
dy(t) &=& -(\frac{\partial g_1}{\partial x}(t)x(t) + \frac{\partial g_1}{\partial y}(t)y(t) + \frac{\partial g_1}{\partial z}(t) z(t) + \frac{\partial g_1}{\partial \xi}(t) \beta(t))dt \\[\smallskipamount]
&&+ (-\frac{\partial g_2}{\partial y}(t) y(t) -\frac{\partial g_2}{\partial \xi }(t)\beta(t)) d\xi(t) - g_2(t,Y,\xi)d\beta(t) + z(t)dB(t).
\end{array}
\]
\noindent Here, we have used that

\[
\begin{array}{lll}
g_2(t, Y^{\xi + a\beta}, \xi +a\beta) d(\xi + a\beta)(t) \\[\smallskipamount]
\hspace{2.5cm}= g_2(t, Y^{\xi + a\beta}, \xi +a\beta) d\xi(t) + a g_2(t, Y^{\xi + a\beta}, \xi +a\beta)d\beta(t)
\end{array}
\]

\noindent which implies (by the product rule) that

\[
\begin{array}{lll}
\lim_{a \rightarrow 0^+} \frac{g_2(t, Y^{\xi + a\beta}, \xi +a\beta) d(\xi + a\beta)(t) - g_2(t, Y, \xi) d\xi(t)}{a} \\[\smallskipamount]
 \hspace{1.5cm}=(-\frac{\partial g_2}{\partial y}(t)y(t) - \frac{\partial g_2}{\partial \xi}(t)\beta(t) )d\xi(t) - g_2(t,Y,\xi)d\beta(t) \\[\smallskipamount]
 \hspace{1.9cm}-\lim_{a \rightarrow 0^+} \{ a(\frac{\partial g_2}{\partial y}(t)y(t) + \frac{\partial g_2}{\partial \xi}(t)\beta(t)) \} \\[\smallskipamount]
\hspace{1.5cm}= (-\frac{\partial g_2}{\partial y}(t)y(t) - \frac{\partial g_2}{\partial \xi}(t)\beta(t) )d\xi(t) - g_2(t,Y,\xi)d\beta(t).
\end{array}
\]

Note also that $x(0) = 0$ and $y(T) = h'(X(T))x(T)$ from the boundary conditions of equations~\eqref{eq: notat_(1)}-\eqref{eq: notat_(2)}.

Define $\mc{T}$ as the set of times where the process $\xi(t)$ jumps, and $\mc{T}_{\beta}$ as the set of times where both processes $\beta(t)$ and $\xi(t)$ jump.

With this in mind, we are ready to prove the necessary maximum principle.

\begin{proposition}
\label{thm: necessary}
Assume that \eqref{eq33}
holds. Then the following are equivalent:

\begin{itemize}
\item{$\lim_{a \rightarrow 0^+} \frac{J(\xi + a \beta) - J(\xi)}{a}  \leq 0$ for all $\beta \in \mathcal{V}(\xi)$.}
\item{$
\begin{array}{lll}

E[\int_0^T H_2(t) d \beta(t)]  + E[\int_0^T \beta(t) \{\lambda(t) \frac{\partial g_2}{\partial \xi}(t) d \hat{\xi}(t) + \lambda(t) \frac{\partial g_1}{\partial \xi}(t) dt  \} ] \\[\smallskipamount]
- E[\sum_{t \in \mc{T}_{\beta}} g_2(t^-) y(t^-) \frac{\partial H_1}{\partial y}(t^-) \Delta \xi(t) \Delta \beta(t)] \\[\smallskipamount]
- E[\sum_{t \in \mc{T}} (\frac{\partial g_2}{\partial y}(t^-)y(t^-) + \frac{\partial g_2}{\partial \xi}(t^-)\beta(t^-) ) \frac{\partial H_1}{\partial y}(t^-) (\Delta\xi(t))^2 ] \leq 0 \mbox{ } \forall \mbox{ } \beta \in \mathcal{V}(\xi).
\end{array}
$}
\end{itemize}
\end{proposition}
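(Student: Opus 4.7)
The plan is to identify
\[
D_\beta J(\xi) := \lim_{a\to 0^+}\frac{J(\xi+a\beta)-J(\xi)}{a}
\]
with the expression appearing in the second bullet; since the first bullet is exactly $D_\beta J(\xi)\le 0$ for all $\beta\in\mathcal{V}(\xi)$, the stated equivalence will follow at once. First I would differentiate $J$ at $\xi$ in direction $\beta$ using \eqref{eq: notat_(3)} and the derivative processes \eqref{eq33}, which gives
\[
D_\beta J(\xi) = E\!\int_0^T \tfrac{\partial f}{\partial x}(t)x(t)\,dt + E[\varphi'(X(T))x(T)] + \psi'(Y(0))y(0).
\]
Using $\varphi'(X(T)) = p(T) - \lambda(T)h'(X(T))$ from the terminal condition of \eqref{eq: notat_(7)}, the relation $y(T)=h'(X(T))x(T)$ obtained by differentiating $Y(T)=h(X(T))$, and $\psi'(Y(0))=\lambda(0)$ from the initial condition of \eqref{eq: notat_(8)}, these boundary terms become $E[p(T)x(T)] - E[\lambda(T)y(T)] + \lambda(0)y(0)$.

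Next I would apply the It\^o product rule to $p(t)x(t)$ on $[0,T]$, inserting the dynamics \eqref{eq: notat_(7)} of $p$ and the linearised FSDE for $x$ stated right before Proposition~\ref{thm: necessary}. Since $\tfrac{\partial H_1}{\partial x} = \tfrac{\partial f}{\partial x} + p\tfrac{\partial b}{\partial x} + q\tfrac{\partial \sigma}{\partial x} + \lambda\tfrac{\partial g_1}{\partial x}$ and $\tfrac{\partial H_2}{\partial x} = p\tfrac{\partial\theta}{\partial x}$, the $x$-linear bounded-variation pieces cancel in both the $dt$- and the $d\xi$-parts. Localising by stopping times exactly as in \eqref{eq: localize}--\eqref{eq: localize_2}, using $x(0)=0$ and taking expectations yields
\[
E[p(T)x(T)] = E\!\int_0^T p(t)\theta(t)\,d\beta(t) - E\!\int_0^T x(t)\Bigl(\tfrac{\partial f}{\partial x}(t)+\lambda(t)\tfrac{\partial g_1}{\partial x}(t)\Bigr)dt.
\]

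The main work, and the main obstacle I expect, is the analogous product rule for $\lambda(t)y(t)$, where both factors carry $d\xi$-driven jumps. The $y$-linear bounded-variation pieces $y\tfrac{\partial H_1}{\partial y}dt$ and $y\tfrac{\partial H_2}{\partial y}d\xi$ from $y\,d\lambda$ cancel the $-\lambda\tfrac{\partial g_1}{\partial y}y\,dt$ and $-\lambda\tfrac{\partial g_2}{\partial y}y\,d\xi$ pieces of $\lambda\,dy$, and the continuous quadratic co-variation contributes $\lambda\tfrac{\partial g_1}{\partial z}z\,dt$ which cancels the $-\lambda\tfrac{\partial g_1}{\partial z}z\,dt$ piece of $\lambda\,dy$. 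The pure-jump co-variation $\sum_{t\le T}\Delta\lambda(t)\Delta y(t)$ is where the two jump sums of the statement appear: since $\Delta\lambda(t)=\tfrac{\partial H_2}{\partial y}(t^-)\Delta\xi(t)$ and
\[
\Delta y(t) = -\Bigl(\tfrac{\partial g_2}{\partial y}(t^-)y(t^-) + \tfrac{\partial g_2}{\partial \xi}(t^-)\beta(t^-)\Bigr)\Delta\xi(t) - g_2(t^-)\Delta\beta(t),
\]
the co-variation decomposes into a sum over $\mathcal{T}$ carrying $(\Delta\xi(t))^2$ and a sum over $\mathcal{T}_\beta$ carrying $\Delta\xi(t)\Delta\beta(t)$, with coefficients determined by $\tfrac{\partial H_2}{\partial y}=\lambda\tfrac{\partial g_2}{\partial y}$. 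What remains from $\lambda\,dy$ are the $-\lambda\tfrac{\partial g_1}{\partial x}x\,dt$, $-\lambda\tfrac{\partial g_1}{\partial \xi}\beta\,dt$, $-\lambda\tfrac{\partial g_2}{\partial \xi}\beta\,d\xi$ and $-\lambda g_2\,d\beta$ terms; after localisation, taking expectations gives $E[\lambda(T)y(T)]-\lambda(0)y(0)$ as their sum plus the jump co-variation.

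Adding the $px$ and $\lambda y$ identities and substituting back into the expression for $D_\beta J(\xi)$, the $-E\int_0^T \tfrac{\partial f}{\partial x}x\,dt$ and the $\lambda\tfrac{\partial g_1}{\partial x}x$ contributions cancel the $E\int_0^T \tfrac{\partial f}{\partial x}x\,dt$ in the first paragraph, and what survives is $E\int_0^T(p\theta+\lambda g_2)\,d\beta = E\int_0^T H_2\,d\beta$ together with $E\int_0^T \beta(\lambda\tfrac{\partial g_1}{\partial \xi}dt + \lambda\tfrac{\partial g_2}{\partial \xi}d\hat{\xi})$ and the two jump sums, i.e.\ exactly the left-hand side of the bullet-two inequality. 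The martingale parts (the $dB$-integrals from both product rules) are handled, as in Theorem~\ref{eq: Suff_Max_Princ}, by the same stopping-time localisation and dominated convergence argument, justified by the $L^2$-assumption on $(x,y,z)$ in \eqref{eq33}; the hardest bookkeeping is separating the pure $\xi$-jump sum from the simultaneous $\xi,\beta$-jump sum, which is why the proposition writes them as distinct indices $\mathcal{T}$ and $\mathcal{T}_\beta$.
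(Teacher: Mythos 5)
Your proposal follows essentially the same route as the paper: decompose the G\^ateaux derivative into the three terms $I_1,I_2,I_3$, dualize $I_2$ and $I_3$ via It\^o's product rule applied to $p(t)x(t)$ and $\lambda(t)y(t)$ respectively (with stopping-time localisation as in the sufficient maximum principle), and read off the jump covariation of $\lambda$ and $y$ as the two sums over $\mathcal{T}$ and $\mathcal{T}_\beta$. One small point: your covariation computation produces the coefficient $\frac{\partial H_2}{\partial y}(t^-)=\lambda(t^-)\frac{\partial g_2}{\partial y}(t^-)$ (since the jump of $\lambda$ comes from the $d\xi$-part of \eqref{eq: notat_(8)}), whereas the proposition as printed carries $\frac{\partial H_1}{\partial y}(t^-)$; your version is the one consistent with the adjoint dynamics, so this is a discrepancy in the paper's bookkeeping rather than a gap in your argument.
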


\begin{proof}

Similarly as in the proof of Theorem~\ref{eq: Suff_Max_Princ}, by introducing a suitable increasing sequence of stopping times converging to $T$, we see that we may assume that all local martingales appearing in the proof below are martingales.
Note that

\[
\lim_{a \rightarrow 0^+} \frac{J(\xi + a \beta) -J(\xi)}{a}  = I_1 + I_2 + I_3
\]

\noindent where

\[
\begin{array}{llll}
I_1 &=& \lim_{a \rightarrow 0^+} \frac{ E[\int_0^T f(t, X^{\xi + a \beta}(t)) dt] - E[\int_0^T f(t, X^{\xi}(t)) dt] }{a}, \\[\smallskipamount]
I_2 &=& \lim_{a \rightarrow 0^+} \frac{E[\varphi(X^{\xi + a\beta}(T))] -E[\varphi(X^{\xi}(T))]}{a} \\[\smallskipamount]
I_3 &=& \lim_{a \rightarrow 0^+} \frac{\psi(Y^{\xi + a \beta}(0)) - \psi(Y^{\xi}(0)) }{a}.
\end{array}
\]

Then, by changing the order of integration and differentiation,

\[
I_1 = E[\int_0^T \frac{\partial f}{\partial x}(t) x(t) dt].
\]

Also,

\[
I_2=E[\varphi'(X(T))x(T)] = E[(p(T) - \lambda(T) h'(X(T))) x(T)]
\]

\noindent and

\[
I_3 = \psi'(Y(0))y(0)= \lambda(0)y(0).
\]

Furthermore, by It{\^o}'s product rule and the definitions

\[
\begin{array}{lll}
I_2 &=& E[\int_0^T p(t) d x(t) + \int_0^T x(t) dp(t) + \int_0^T d[p,x](t)] -E[ \lambda(T) h'(X(T)) x(T)] \\[\smallskipamount]
&=& E[\int_0^T p(t) (\frac{\partial b}{\partial x}(t) x(t) dt + \frac{\partial \theta}{\partial x}(t) x(t) d\xi(t) + \theta(t, X(T))d\beta(t) )] \\[\smallskipamount]
&&- E[\int_0^T x(t) \{\frac{\partial H_1}{\partial x}(t) dt + \frac{\partial H_2}{\partial x}(t) d\xi(t) \}] + E[\int_0^T q(t) \frac{\partial \sigma}{\partial x}(t) x(t) dt] \\[\smallskipamount]
&& -E[ \lambda(T) h'(X(T)) x(T)] \\[\smallskipamount]
&=& E[\int_0^T x(t) \{-\frac{\partial f}{\partial x}(t)dt - \lambda(t) \frac{\partial g_1}{\partial x}(t) dt  \} ] \\[\smallskipamount]
&&+ E[\int_0^T \{ H_2(t) - \lambda(t) g_2(t) \} d \beta(t)] - E[ \lambda(T) h'(X(T)) x(T)] \\[\smallskipamount]
&=& -I_1 - E[\int_0^T x(t) \{ \lambda(t) \frac{\partial g_1}{\partial x}(t) dt  \} ] \\[\smallskipamount]
&&+ E[\int_0^T \{ H_2(t) - \lambda(t) g_2(t) \} d \beta(t)] -E[ \lambda(T) h'(X(T)) x(T)]. \end{array}
\]

Similarly, we see that from It{\^o}'s product rule and the chain rule,

\[
\begin{array}{lll}
I_3 &=& E[\lambda(T)y(T)] - E[\int_0^T \lambda(t) dy(t) + \int_0^T y(t) d\lambda(t) + \int_0^T d[\lambda, y](t)] \\[\smallskipamount]
&=& E[\lambda(T)h'(X(T)) x(T)] - E[\int_0^T \lambda(t) \{( - \frac{\partial g_2}{\partial \xi}(t) \beta(t)) d\xi(t) \\[\smallskipamount]
&&- g_2(t) d\beta(t) - (\frac{\partial g_1}{\partial x}(t)x(t) + \frac{\partial g_1}{\partial \xi}(t) \beta(t)  )dt \}] \\[\smallskipamount]
&&- E[\sum_{t \in \mc{T}_{\beta}} g_2(t^-) y(t^-) \frac{\partial H_1}{\partial y}(t^-) \Delta \xi(t) \Delta \beta(t)] \\[\smallskipamount]
&&- E[\sum_{t \in \mc{T}} (\frac{\partial g_2}{\partial y}(t^-)y(t^-) + \frac{\partial g_2}{\partial \xi}(t^-)\beta(t^-) ) \frac{\partial H_1}{\partial y}(t^-) (\Delta\xi(t))^2 ]
\end{array}
\]
\noindent where $\Delta \xi(t) :=\xi(t)-\xi(t-)$ and $\Delta \beta(t) :=  \beta(t) -  \beta(t-)$. Then, by the previous calculations,

\[
\begin{array}{lll}
I_1 +I_2 +I_3 &=& E[\int_0^T H_2(t) d \beta(t)] + E[\int_0^T \beta(t) \{\lambda(t) \frac{\partial g_2}{\partial \xi}(t) d \hat{\xi}(t) + \lambda(t) \frac{\partial g_1}{\partial \xi}(t) dt  \} ] \\[\smallskipamount]
&&- E[\sum_{t \in \mc{T}_{\beta}} g_2(t^-) y(t^-) \frac{\partial H_1}{\partial y}(t^-) \Delta \xi(t) \Delta \beta(t)] \\[\smallskipamount]
&&- E[\sum_{t \in \mc{T}} (\frac{\partial g_2}{\partial y}(t^-)y(t^-) + \frac{\partial g_2}{\partial \xi}(t^-)\beta(t^-) ) \frac{\partial H_1}{\partial y}(t^-) (\Delta\xi(t))^2 ].
\end{array}
\]

Hence,

\[
\begin{array}{lll}
\frac{d }{da} J(\xi + a \beta)|_{a=0} &=& E[\int_0^T H_2(t) d \beta(t)] + E[\int_0^T \beta(t) \{\lambda(t) \frac{\partial g_2}{\partial \xi}(t) d \hat{\xi}(t) + \lambda(t) \frac{\partial g_1}{\partial \xi}(t) dt  \} ] \\[\smallskipamount]
&&- E[\sum_{t \in \mc{T}_{\beta}} g_2(t^-) y(t^-) \frac{\partial H_1}{\partial y}(t^-) \Delta \xi(t) \Delta \beta(t)] \\[\smallskipamount]
&&- E[\sum_{t \in \mc{T}} (\frac{\partial g_2}{\partial y}(t^-)y(t^-) + \frac{\partial g_2}{\partial \xi}(t^-)\beta(t^-) ) \frac{\partial H_1}{\partial y}(t^-) (\Delta\xi(t))^2 ].
\end{array}
\]

\end{proof}

If we assume that $\xi(t)=\hat{\xi}(t)$ is a continuous process, we get the following corollary to Proposition~\ref{thm: necessary}:

\begin{corollary}
\label{corollary: rewritten}
Assume that \eqref{eq33} holds and that $\hat{\xi}$ is continuous. Then the following are equivalent:

\begin{itemize}
\item{$\lim_{a \rightarrow 0^+} \frac{J(\hat{\xi} + a \beta) - J(\hat{\xi})}{a}  \leq 0$ for all $\beta \in \mathcal{V}(\hat{\xi})$.}
\item{$
\begin{array}{lll}
E[\int_0^T \hat{H}_2(t) d \beta(t)] +  E[\int_0^T \beta(t) \{\hat{\lambda}(t) \frac{\partial \hat{g}_2}{\partial \xi}(t) d \hat{\xi}(t) + \hat{\lambda}(t) \frac{\partial \hat{g}_1}{\partial \xi}(t) dt  \} ]  \leq 0 \\[\smallskipamount]
\end{array}
$
\noindent for all  $\beta \in \mathcal{V}(\hat{\xi})$.
}
\end{itemize}

This final inequality is also equivalent to

\begin{equation}
\label{eq: necessary_condition}
\begin{array}{lll}
 E[\int_0^T \langle \nabla_{\xi} \hat{\mc{H}}(dt, d \hat{\xi}(t)), \beta(t) \rangle ]  \leq 0 \mbox{ for all} \mbox{ } \beta \in \mathcal{V}(\hat{\xi})
\end{array}
\end{equation}
\noindent where $\langle \nabla_{\xi} \hat{\mc{H}}(dt, d \hat{\xi}(t)), \beta(t) \rangle := \lim_{a \rightarrow 0^+} \frac{\hat{\mc{H}}(\hat{\xi} + a \beta)(dt, d(\hat{\xi} + a \beta)) -\hat{\mc{H}}(\hat{\xi})(dt, d\hat{\xi})  }{a}$.

\end{corollary}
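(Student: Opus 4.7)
The plan is to obtain this corollary as a direct specialization of Proposition~\ref{thm: necessary} to the case when $\hat{\xi}$ is continuous, and then to verify that the remaining integral expression is precisely the action of the Fr\'echet derivative $\nabla_{\xi}\hat{\mc{H}}$ on $\beta$.

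First I would invoke Proposition~\ref{thm: necessary} with $\xi$ replaced by $\hat{\xi}$. The statement there is an equivalence between $\lim_{a\to 0^+}\frac{J(\hat{\xi}+a\beta)-J(\hat{\xi})}{a}\le 0$ and an inequality containing four terms: the two Lebesgue / $d\hat{\xi}$ integrals that appear in the corollary, plus two jump sums over $\mc{T}$ and $\mc{T}_\beta$. The crucial observation is that $\mc{T}$ is the set of times at which $\hat{\xi}$ jumps, and $\mc{T}_\beta$ requires a simultaneous jump of $\hat{\xi}$. Under the continuity hypothesis $\Delta\hat{\xi}(t)=0$ for every $t$, so both $\mc{T}$ and $\mc{T}_\beta$ are empty (or, in any case, the products $\Delta\hat{\xi}(t)\Delta\beta(t)$ and $(\Delta\hat{\xi}(t))^2$ vanish identically). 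Consequently the two sum terms drop out of the equivalent condition, and what remains is exactly the second bullet in the corollary. This settles the first equivalence essentially for free.

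For the second equivalence, I would compute $\langle \nabla_\xi \hat{\mc{H}}(dt,d\hat{\xi}(t)),\beta(t)\rangle$ from its defining limit. Using $\hat{\mc{H}}(\xi)(dt,d\xi) = H_1(t,\hat{X},\hat{Y},\hat{Z},\xi,\hat{p},\hat{q},\hat{\lambda})\,dt + H_2(t,\hat{X},\hat{Y},\xi,\hat{p},\hat{\lambda})\,d\xi(t)$, one perturbs $\hat{\xi}\mapsto \hat{\xi}+a\beta$ so that $d(\hat{\xi}+a\beta)=d\hat{\xi}+a\,d\beta$, expands, and divides by $a$. The $H_1$ piece contributes $\hat{\lambda}(t)\frac{\partial \hat g_1}{\partial\xi}(t)\beta(t)\,dt$ (only the explicit $\xi$ dependence survives, because the partial derivatives of $g_1$ with respect to $x,y,z$ enter the adjoint equations and do \emph{not} appear when we differentiate $\mc{H}$ with $\hat{X},\hat{Y},\hat{Z},\hat{p},\hat{q},\hat{\lambda}$ held fixed). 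The $H_2$ piece, by the product rule, yields two contributions: a directional derivative of the integrand, $\hat{\lambda}(t)\frac{\partial \hat g_2}{\partial\xi}(t)\beta(t)\,d\hat{\xi}(t)$, and a boundary-measure contribution $\hat{H}_2(t)\,d\beta(t)$ from differentiating the measure $d(\hat{\xi}+a\beta)$. Integrating against $[0,T]$ and taking expectation then matches the second bullet term-by-term, establishing the final equivalence~\eqref{eq: necessary_condition}.

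The only genuinely delicate step is justifying that the limit defining $\langle \nabla_\xi \hat{\mc{H}},\beta\rangle$ can be taken under the integral sign and expectation. This reduces to a dominated-convergence argument based on the boundedness of $\partial g_1/\partial\xi$, $\partial g_2/\partial\xi$ and $H_2$ from the standing assumptions, together with the $L^2$ regularity of the derivative processes $x,y,z$ postulated in \eqref{eq33} and the admissibility of $\beta\in\mc{V}(\hat{\xi})$; exactly the same localization by stopping times $\bar\tau_n\uparrow T$ used in the proof of Proposition~\ref{thm: necessary} applies here. I expect this integrability/interchange step, rather than any algebra, to be the main technical point, but it is already implicit in the preceding proof and so requires no new ideas.
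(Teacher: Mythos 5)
Your proposal is correct and follows essentially the same route as the paper: specialize Proposition~\ref{thm: necessary} to continuous $\hat{\xi}$ so that the jump sums over $\mc{T}$ and $\mc{T}_{\beta}$ vanish, then compute the defining limit of $\langle \nabla_{\xi}\hat{\mc{H}},\beta\rangle$ term by term (the $H_1$ piece giving $\hat{\lambda}\frac{\partial \hat{g}_1}{\partial \xi}\beta\,dt$ and the $H_2$ piece splitting into $\hat{\lambda}\frac{\partial \hat{g}_2}{\partial \xi}\beta\,d\hat{\xi}$ plus $\hat{H}_2\,d\beta$) to match the second bullet. Your added remark on justifying the interchange of limit, integral and expectation is a point the paper leaves implicit, but it introduces no new ideas beyond the localization already used in Proposition~\ref{thm: necessary}.
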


\begin{proof}
This is a direct consequence of Proposition~\ref{thm: necessary}, the comments following the proposition and the following calculation:

\[
\begin{array}{llll}
 \langle \nabla_{\xi} \hat{\mc{H}}(dt, d \hat{\xi}(t)), \beta(t) \rangle &=& \lim_{a \rightarrow 0^+} \frac{\hat{\mc{H}}(\hat{\xi} + a \beta)(dt, d(\hat{\xi} + a \beta)) -\hat{\mc{H}}(\hat{\xi})(dt, d\hat{\xi})  }{a}\\[\smallskipamount]
 &=& \lim_{a \rightarrow 0^+} \frac{\hat{H}_1(\hat{\xi} + a \beta)dt -\hat{H}_1(\hat{\xi})dt}{a} \\[\smallskipamount]
 &&+ \lim_{a \rightarrow 0^+} \frac{\hat{H}_2(\hat{\xi} + a \beta)d\hat{\xi}(t) -\hat{H}_2(\hat{\xi})d\hat{\xi}(t)}{a} \\[\smallskipamount]
 && + \lim_{a \rightarrow 0^+} \frac{\hat{H}_2(\hat{\xi} + a \beta)d(a\beta)(t)}{a} \\[\smallskipamount]
 &&= \beta(t)\{ \hat{\lambda}(t) \frac{\partial \hat{g}_1}{\partial \xi}(t) dt + \hat{\lambda}(t) \frac{\partial \hat{g}_2}{\partial \xi}(t) d \hat{\xi}(t) \}  \\[\smallskipamount]
 &&+ \hat{H}_2(t)d \beta(t).
\end{array}
\]

\end{proof}

We analyse the inequality from the second item of Corollary~\ref{corollary: rewritten} more closely, i.e. we consider:

\[
 \begin{array}{lll}
E[\int_0^T \hat{H}_2(s) d \beta(s)] +  E[\int_0^T \beta(s) \{\hat{\lambda}(s) \frac{\partial \hat{g}_2}{\partial \xi}(s) d \hat{\xi}(s) + \hat{\lambda}(s) \frac{\partial \hat{g}_1}{\partial \xi}(s) ds  \} ]  \leq 0. \\[\smallskipamount]
\end{array}
\]

Since this is true for all $\beta \in \mc{V}(\hat{\xi})$, it is in particular true for $\beta(s):= \boldsymbol{1}_{[t,T]}(s)\alpha(\omega) \hat{\xi}(s)$ and $\beta(s):= -\boldsymbol{1}_{[t,T]}(s) \alpha(\omega) \hat{\xi}(s)$, where $\hat{\xi}(s)$ is as in Corollary~\ref{corollary: rewritten} and $\alpha = \alpha(\omega)$ is a bounded $\mathcal{F}_t$-measurable random variable. By combining the two, we see that

\[
 \begin{array}{lll}
0 \leq  E[\int_t^T \hat{H}_2(s) d \hat{\xi}(s)\alpha] +  E[\int_t^T \hat{\xi}(s) \{\hat{\lambda}(s) \frac{\partial \hat{g}_2}{\partial \xi}(s) d \hat{\xi}(s) + \hat{\lambda}(s) \frac{\partial \hat{g}_1}{\partial \xi}(s) ds  \} \alpha] \leq 0. \\[\smallskipamount]
\end{array}
\]

Hence,

\begin{equation}
\label{eq: likhet}
\begin{array}{lll}
E[\int_t^T \hat{H}_2(s) d \hat{\xi}(s)\alpha] +  E[\int_t^T \hat{\xi}(s) \{\hat{\lambda}(s) \frac{\partial \hat{g}_2}{\partial \xi}(s) d \hat{\xi}(s) + \hat{\lambda}(s) \frac{\partial \hat{g}_1}{\partial \xi}(s) ds  \} \alpha]  = 0. \\[\smallskipamount]
\end{array}
\end{equation}

By differentiating the equality~\eqref{eq: likhet} with respect to $t$, we see that

\[
E [\big( \{ \hat{p}(t)\hat{\theta}(t) + \hat{\lambda}(t)\hat{g}_2(t) \} d \hat{\xi}(t) + \hat{\xi}(t)\hat{\lambda}(t)\{ \frac{\partial \hat{g}_2}{\partial \xi}(t) d \hat{\xi}(t) + \frac{\partial \hat{g}_1}{\partial \xi}(t) dt\} \big) \alpha] =0
\]

\noindent  for almost all $t$. Since this holds for all bounded $\mathcal{F}_t$-measurable random variables $\alpha$, we conclude that
\[
\{ \hat{p}(t)\hat{\theta}(t) + \hat{\lambda}(t)\hat{g}_2(t)\} d \hat{\xi}(t) + \hat{\xi}(t)\hat{\lambda}(t)\{ \frac{\partial \hat{g}_2}{\partial \xi}(t) d \hat{\xi}(t) + \frac{\partial \hat{g}_1}{\partial \xi}(t) dt\}  =0.
\]

This is related to the first order condition for $\xi = \hat{\xi}$ to be optimal in \eqref{eq: notat_(10)} (the condition of the sufficient maximum principle Theorem~\ref{eq: Suff_Max_Princ}).
More precisely, this is what we get if we differentiate the function
\begin{equation}
a \mapsto \hat{\lambda}(t)g_1(\hat{\xi}+a\beta)dt+\{ \hat{p}(t)\hat{\theta}(t)+\hat{\lambda}(t)g_2(\hat{\xi}+a\beta)(t)\}d(\hat{\xi}(t)+a\beta(t))
\end{equation}
with respect to $a$ at $a=0$, set this derivative equal to $0$ and then evaluate the result at $\beta=\hat{\xi}$.

\section{Applications}
\label{sec: applications}

\begin{example}
 \label{ex: 7.1}
 Suppose we have a cash flow $X(t) = X^{(\xi)}(t)$ of the form:

 \begin{equation}
  \label{eq: 7.1}
  \begin{array}{llll}
   dX(t) &=& X(t)[b_0(t)dt + \sigma_0(t) dB(t)] - X(t^-) d\xi(t); \mbox{ } t \in [0,T] \\[\smallskipamount]
   X(0) &=& x > 0.
  \end{array}
 \end{equation}

Here $d\xi(t)$ represents the relative consumption rate from $X(t)$ at time $t$.\\

The singular recursive utility process $Y(t) = Y^{(\xi)}(t)$ of the relative consumption rate $\xi(t)$ is assumed to have the form

\begin{equation}
 \label{eq: 7.2}
 \begin{array}{lll}
  dY(t) &=& -\alpha(t)Y(t) d \xi(t) + Z(t) dB(t); \mbox{ } t \in [0,T] \\[\smallskipamount]
  Y(T) &=& h(X(T)).
 \end{array}
\end{equation}

 We want to find $\xi^* \in \mc{A}$ such that

 \begin{equation}
  \label{eq: 7.3}
  Y^{(\xi^*)}(0) = \sup_{\xi \in \mc{A}} Y^{(\xi)}(0).
 \end{equation}
\end{example}

 We apply the results of Section~\ref{sec: max_sing_rec_utility} to study this problem:

 The Hamiltonian~\eqref{eq: notat_(6)} gets the form
 \begin{equation}
  \label{eq: 7.4}
  \begin{array}{lll}
   \mc{H}(t,x,y,\xi,p,q,\lambda)(dt,d\xi) = (x b_0(t) p + x \sigma_0(t)q)dt + (-x p + \lambda \alpha(t)y) d\xi(t)
  \end{array}
 \end{equation}

 The adjoint equations ~\eqref{eq: notat_(7)}-\eqref{eq: notat_(8)} become

 \begin{equation}
  \label{eq: 7.5}
  \begin{array}{llll}
   d\lambda(t) &=& \lambda(t) \alpha(t) d\xi(t); \mbox{ } t \in [0,T] \\[\smallskipamount]
   \lambda(0) &=& 1
  \end{array}
 \end{equation}

 \begin{equation}
  \label{eq: 7.6}
  \begin{array}{llll}
   dp(t) &=& -(b_0(t)p(t) + \sigma_0(t)q(t))dt - \lambda(t) \alpha(t) d\xi(t) + q(t) dB(t); \mbox{ } t \in [0,T] \\[\smallskipamount]
   p(T) &=& \lambda(T) h'(X(T))
  \end{array}
 \end{equation}

 \noindent and the variational inequality (Skorohod reflection problem) ~\eqref{eq: 27} reduces to

 \begin{equation}
  \label{eq: 7.7}
  \begin{array}{lll}
   -\hat{p}(t) \hat{X}(t) + \hat{\lambda}(t)\alpha(t) \hat{Y}(t) &\leq& 0 \mbox{ for all } t \in [0,T], \\[\smallskipamount]
   \{-\hat{p}(t) \hat{X}(t) + \hat{\lambda}(t)\alpha(t) \hat{Y}(t)\} d\hat{\xi}(t) &=& 0 \mbox{ for all } t \in [0,T].
  \end{array}
 \end{equation}

 Thus, we arrive at the following conclusion:

 \begin{theorem}
  \label{thm: 7}
  Suppose the Skorohod reflection problem~\eqref{eq: 7.7} has a solution \\
  $(\hat{X}(t), \hat{Y}(t), \hat{\lambda}(t), \hat{p}(t), \hat{\xi}(t))$. Then $\hat{\xi}(t)$ is an optimal relative consumption rate for the singular recursive utility problem~\eqref{eq: 7.3}.
 \end{theorem}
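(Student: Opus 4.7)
The plan is to recognize Theorem~\ref{thm: 7} as a direct application of the sufficient maximum principle (Theorem~\ref{eq: Suff_Max_Princ}), combined with the reduction to the Skorohod-type condition given in Remark~\ref{remark: 5.2}. First I would match the data of Example~\ref{ex: 7.1} against the abstract framework of Section~\ref{sec: max_sing_rec_utility}: here $b(t,x) = xb_0(t)$, $\sigma(t,x) = x\sigma_0(t)$, $\theta(t,x) = -x$, $g_1 \equiv 0$, $g_2(t,y,\xi) = \alpha(t)y$. Since $J(\xi) = Y^{(\xi)}(0)$, the remaining data are $f \equiv 0$, $\varphi \equiv 0$ and $\psi(y) = y$.

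Next I would verify in turn the three hypotheses of Theorem~\ref{eq: Suff_Max_Princ}. Continuity of $\hat{\xi}$ is part of the assumption that the Skorohod reflection problem has a solution. The functions $\varphi \equiv 0$ and $\psi(y) = y$ are $C^1$, concave, and $\psi'(y) = 1 \geq 0$. For the concavity of $\mathcal{H}$: the $dt$-coefficient $H_1 = xb_0(t)\hat{p}(t) + x\sigma_0(t)\hat{q}(t)$ is linear in $(x,y,z,\xi)$, and the $d\xi$-coefficient $H_2 = -x\hat{p}(t) + \hat{\lambda}(t)\alpha(t)y$ is linear in $(x,y,\xi)$; both are therefore concave, so the joint concavity of the map in~\eqref{eq: notat_(9)} holds trivially.

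The crux is the variational inequality. Because neither $g_1$ nor $g_2$ depends on $\xi$, we are in the setting of Remark~\ref{remark: 5.2}, which shows that~\eqref{eq: notat_(10)} is equivalent to the two-sided Skorohod condition~\eqref{eq: 27}:
\[
\hat{p}(t)\theta(t,\hat{X}(t)) + \hat{\lambda}(t) g_2(t,\hat{Y}(t)) \leq 0, \qquad \bigl\{\hat{p}(t)\theta(t,\hat{X}(t)) + \hat{\lambda}(t) g_2(t,\hat{Y}(t))\bigr\} d\hat{\xi}(t) = 0.
\]
Substituting $\theta(t,x) = -x$ and $g_2(t,y) = \alpha(t)y$, this is exactly~\eqref{eq: 7.7}, which by hypothesis is satisfied by the Skorohod-problem solution $(\hat{X},\hat{Y},\hat{\lambda},\hat{p},\hat{\xi})$. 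Thus every hypothesis of Theorem~\ref{eq: Suff_Max_Princ} holds, and the conclusion that $\hat{\xi}$ is optimal for~\eqref{eq: 7.3} follows directly.

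The only item that is not purely mechanical is the implicit requirement that, alongside the tuple provided by the Skorohod problem, the processes $\hat{Z}(t)$ and $\hat{q}(t)$ appearing in the forward-backward system~\eqref{eq: 7.5}-\eqref{eq: 7.6} be well defined so that the sufficient maximum principle is genuinely applicable. I expect this to be the mildest form of obstacle: it is handled by the martingale representation theorem applied to the right-continuous martingales associated with $\hat{Y}$ and $\hat{p}$, exactly as in the construction used in the proof of Theorem~\ref{thm: existence_uniqueness_sing_BSDE}.
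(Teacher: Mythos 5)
Your proposal is correct and follows exactly the route the paper itself takes: Theorem~\ref{thm: 7} is presented there as the immediate conclusion of specializing the Hamiltonian, adjoint equations, and variational inequality of Section~\ref{sec: max_sing_rec_utility} to Example~\ref{ex: 7.1} and invoking the sufficient maximum principle together with the Skorohod reduction of Remark~\ref{remark: 5.2}. Your explicit verification of the concavity hypotheses and your remark on the existence of $\hat{Z}$ and $\hat{q}$ are slightly more careful than the paper's own (implicit) argument, but the substance is the same.
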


For more on Skorohod problems and conditions guaranteeing the existence of solutions to such problems, see {\O}ksendal and Sulem (2007), chapter 5.2.

\end{document}